\newtheorem{proposition}{Proposition}[section]
\newtheorem{lemma}{Lemma}[section]
\newtheorem{remark}{Remark}[section]
\newtheorem{theorem}{Theorem}[section]
\newtheorem{definition}{Definition}[section]
\newtheorem{corollary}{Corollary}[section]
\numberwithin{equation}{section}
\title{Entropy-Based Characterization of  Fluctuations in Stochastically Perturbed Integrable Hamiltonian Systems}
\authors{Chen Wang$^{1}$ and Yong Li$^{1,2}$}
\affiliation{$^{1}$School of Mathematics, Jilin University, Changchun, 130012, People's Republic of China\\
	$^{2}$Center for Mathematics and Interdisciplinary Sciences, Northeast Normal University,
	Changchun, 130024, People's Republic of China\\
}
\keywords{Stochastic perturbations; Entropy; Hamiltonian systems.}
\begin{document}
\maketitle

\begin{abstract}
We study the entropy characteristics of stochastic fluctuations arising from randomly perturbed integrable Hamiltonian systems. Starting from  action-angle dynamics, we derive a discrete fluctuation process and investigate its asymptotic randomness from both dynamical and information-theoretic perspectives.

\begin{itemize}
	\item On the dynamical side, we establish an entropy variational principle for the induced shift dynamics on an energy-constrained infinite-dimensional path space. We show that, among all invariant probability measures satisfying the asymptotic energy constraint inherent to the system, the maximal entropy rate is attained by a Gaussian product measure. This establishes an entropy variational principle for stochastic dynamical trajectories under an asymptotic energy constraint.
	\item On the statistical side, we characterize the asymptotic Gaussian behavior of accumulated fluctuations through information-theoretic convergence. Beyond weak convergence given by the central limit theorem, we prove convergence of relative entropy and total variation distance between the  fluctuation distributions and their Gaussian limits. 
\end{itemize}
\end{abstract}

\printkeywords

\tmsceendfrontmatter

\section{Introduction}
\subsection{Stochastic Perturbations and  Introduction of Entropy}
Integrable Hamiltonian systems form a fundamental class of deterministic dynamical systems, where the presence of conserved quantities leads to highly organized motions and strong constraints on long-time evolution \cite{MR2036760}.
In many physical and mathematical models, however, such regular dynamics are inevitably affected by stochastic perturbations arising from external fluctuations and modeling uncertainties \cite{MR1201269,MR1723992}.
 Understanding how stochastic perturbations modify the long-time behavior of integrable systems and lead to emergent statistical structures is an important problem in stochastic dynamics.

To characterize the statistical and dynamical structures induced by such perturbations, existing studies have extensively explored the asymptotic behaviors of the systems, ranging from fluctuation limit theorems to Lyapunov-type characteristics of perturbed trajectories
 \cite{MR1894102,MR2062922,MR4801501,MR466791}. While limit theorems and ergodic invariants have provided profound insights into these aspects individually, they often address the distributional and dynamical features in isolation. Consequently, a systematic characterization that unifies these perspectives for fluctuation processes in perturbed Hamiltonian systems remains largely unexplored.

Entropy offers a natural and powerful framework to bridge these gaps, quantifying uncertainty and information generation in both probability theory and dynamical systems. In information theory, Shannon entropy and relative entropy provide rigorous metrics for distinguishing probability distributions \cite{MR1122806,MR4687403}, while in various dynamical systems, topological entropy quantifies the global orbit complexity, which is intrinsically linked to the Kolmogorov–Sinai entropy (the rate of information production along trajectories) via the variational principle \cite{MR4687027,MR4840048,MR2350155,MR3244290,MR1819189,MR648108}. However, applying these concepts to stochastic Hamiltonian systems presents a unique challenge: unlike standard stochastic processes defined a priori, the fluctuation processes here emerge dynamically from the underlying perturbed Hamiltonian flow. This work addresses this challenge by developing an entropy-based characterization that captures both the asymptotic randomness and the intrinsic dynamical structure of these emergent fluctuations.

\subsection{Main Results and Their Dynamical Interpretation}
	The central object of this study is the normalized fluctuation process
\begin{equation*}
	\hat{\theta}_{t}=\frac{\theta_{t}-\mathbb{E}\left[\theta_{t}\right]}{\sqrt{t}}, \ \ \ \ t\ge 0,
\end{equation*}
generated by stochastically perturbed integrable Hamiltonian
systems. More precisely, its discrete-time observations $\left\{\hat{\theta}_{n}\right\}_{n\in \mathbb{N}}$. In action-angle coordinates $ \left(I,\theta\right)\in \mathbb{R}^{m}\times\mathbb{T}^{d}$,  the dynamics are governed by
\begin{equation}
	dI_{t}=m_{I}d\xi_{t},\ \ \ \ d\theta_{t}=\omega\left(I_{t}\right)dt+m_{\theta}d\eta_{t},
\end{equation}
where $\xi_{t},\eta_{t}$ are independent stochastic processes
with intensities $m_I, m_\theta$. We assume the frequency $\omega$ and noise coefficients are sufficiently regular to ensure a finite asymptotic variance 
\begin{equation*}
		\tilde{\sigma}^{2}:=\lim_{n\to \infty}\frac{1}{n}\sum_{k=0}^{n-1}\mathrm{Var}\left(\hat{\theta}_{k}\right).
\end{equation*} 

To analyze the complexity of these fluctuations from a dynamical systems perspective, we employ a lattice-based coarse-graining. Rather than focusing on metric properties of the raw trajectories, we consider a symbolic encoding where the continuous fluctuation variables are mapped onto a discrete lattice $\Xi_{i}=\left\{a_{i}+jl_{i},j\in \mathbb{Z}\right\}$ via
		\begin{equation*}
		\tilde{\theta}_{k,i}:= a_{i}+l_{i} \cdot\mathrm{round}\left(\frac{\hat{\theta}_{k,i}-a_{i}}{l_{i}}\right),\ \ \ \ i=1,2,\cdots,d,
	\end{equation*}
	where $\mathrm{round}\left(\cdot\right)$ represents the nearest integer to $\cdot$, $a_{i},l_{i}>0$. 
	 This discretization allows us to treat the fluctuation process as a symbolic dynamical system. It provides a rigorous framework to quantify the orbit complexity, specifically, the rate at which the system generates new dynamical patterns—consistent with the concept of Kolmogorov-Sinai (KS) entropy.
	
Building on this framework, our main results establish a comprehensive picture of the emergent complexity in the system, rigorously connecting microscopic frequency fluctuations to macroscopic diffusion.
 We first determine the theoretical maximum of dynamical randomness under the physical energy constraint (Theorem 3.1). Let 
	\begin{equation*}
		\mathcal{V}^{(d)}_{\infty}\left(\tilde{\sigma}^{2}\right):=\left\{x=\left(x_{1},x_{2},\cdots\right)\in \left(\mathbb{R}^{d}\right)^{\mathbb{N}^{+}}:\limsup_{n\to \infty}\frac{1}{n}\sum_{k=1}^{n}\left\|x_{k}\right\|_{d}^{2}\le \tilde{\sigma}^{2} \right\}.
	\end{equation*}
We prove that the maximum KS entropy of system (1.1) is $\log\left(2\pi\mathrm{e}\tilde{\sigma}^{2}\right)/2$, and this upper bound is achieved by the law of an independent $d$-dimensional Gaussian noise. 
This establishes a sharp upper bound on the loss of regularity.
Remarkably, we show that the topological entropy of the system, subject to this energy constraint, exactly saturates this Gaussian bound. This indicates that the Gaussian measure represents the maximal entropy state allowed by the prescribed energy constraint.
	
Beyond Theorem 3.1,	for the accumulated fluctuations  $S_{n}:=\sum_{k=1}^{n}\tilde{\theta}_{k}$, we establish the convergence to the Gaussian limit in the strongest information-theoretic sense (Theorem 4.1), that is,
	\begin{align*}
		\lim_{n \to \infty}\left[H\left(S_{n}\right)+\sum_{j=1}^{d}\log l_{j}-\frac{1}{2}\log\left(\left(2\pi\mathrm{e}\right)^{d}\cdot\mathrm{det}\left(\Sigma_{n}^{2}\right)\right)\right]=0,
	\end{align*}
	and $\lim_{n \to \infty}D_{KL}\left(S_{n}\mid\mid\mathcal{N}\left(0,\Sigma_{n}^{2}\right)\right)=0$, where $H,D_{\mathrm{KL}}$ denote respectively Shannon entropy and Kullback–Leibler divergence, $\Sigma^{2}_{n}$ is the associated covariance matrix.
	Furthermore, this convergence holds in the sense of total variation, i.e., there exists a quantized Gaussian variable $Z$ such that $\lim_{n \to \infty}\left\|S_{n}-Z\right\|_{TV}=0$, where $\left\|\cdot\right\|_{\mathrm{TV}}$ denotes the total variation distance. 
	This establishes a sharp entropic bound for the phase diffusion, the convergence $D_{KL} \to 0$ implies that the fluctuation distribution becomes asymptotically indistinguishable from the Gaussian reference distribution,
	the information content of the phase becomes indistinguishable from that of a purely diffusive process. 
	
	\subsection{Technical Challenges and Innovations}
	Implementing the dual-level entropy framework presented in Section 1.2 requires overcoming fundamental theoretical barriers that lie beyond the scope of classical ergodic theory and probability. Specifically, we address two major technical hurdles:

	\begin{itemize}
		\item \textbf{Extending variational principles to non-compact, energy-constrained path spaces.} 
		While variational principles for topological entropy on non-compact spaces have been explored in the literature (e.g., \cite{MR370542,MR2109476,MR1971209}), these classical frameworks crucially rely on local compactness or finite-dimensional approximations that are fundamentally incompatible with our infinite-dimensional, energy-constrained path space 
		$\mathcal{V}^{(d)}_{\infty}\left(\tilde{\sigma}^{2}\right)$ subject to asymptotic energy constraints. This lack of compactness renders standard topological entropy theories inapplicable.
		
	For this, we develop a rigorous projective limit framework.
	By projecting the dynamics onto finite-length path spaces and carefully controlling the entropy defect, we successfully establish a variational principle for the KS entropy in this non-compact setting, explicitly identifying the Gaussian measure as the unique maximizer.
	
	\item  \textbf{Information-Theoretic Convergence for Non-stationary, Discretized Dynamics.} Standard entropic Central Limit Theorems (e.g., Barron's theorem) heavily rely on the assumptions of stationarity or continuous state spaces \cite{MR815975,MR4687403,MR2408387}. In our case, the fluctuation process is intrinsically non-stationary, and our coarse-graining maps it to a discrete lattice, which introduces non-trivial quantization effects that invalidate standard relative entropy estimates.
	
	For this, we establish a novel discrete-time entropic CLT tailored for non-stationary, lattice-valued observables. By introducing a lattice-adapted Bernoulli decomposition inspired by \cite{MR4687403} and verifying a generalized Lindeberg condition, we prove that the quantization errors vanish asymptotically, allowing us to rigorously control the information-theoretic metrics. 
	\end{itemize}
	
	\subsection{Organization of the Paper}
The remainder of this paper is organized as follows.
In Section 2, we provide the necessary preliminaries concerning entropy concepts and symbolic notations used throughout the paper.

In Section 3, we present the finite-dimensional approximation scheme for the constrained path space $\mathcal{V}_{\infty}^{(d)}(\tilde{\sigma}^{2})$, leading to the proof of the variational principle (Theorem 3.1) that identifies the Gaussian noise as the unique KS entropy maximizer.  In Section 4, we develop a lattice‑based framework for the fluctuation sequence $\left\{\tilde{\theta}_{n}\right\}_{n\in\mathbb{N}^{+}}$. We prove a strong entropic central limit theorem (Theorem 4.1) and, recover the corresponding convergence for the original continuous process (Corollary 4.1).  Finally, Section 5 concludes the paper and discusses possible extensions of the present framework.

\section{Preliminaries}
\begin{definition}[Topological entropy]
	Let $(X,d)$ be a metric space and
	$T:X\rightarrow X$ be a continuous map.
	For $n\geq1$, define the Bowen metric
	\[
	d_n(x,y)
	=
	\max_{0\leq k<n}
	d(T^kx,T^ky).
	\]
	For a subset $Z\subset X$ and $\epsilon>0$, let
	$
	s(Z,n,\epsilon)
	$
	denote the maximal cardinality of an
	$(n,\epsilon)$-separated subset of $Z$ with respect to the
	Bowen metric $d_n$.
	The topological entropy of $T$ on the subset $Z$ is defined by
	\[
	h_{\mathrm{top}}(T,Z)
	=
	\lim_{\epsilon\to0}
	\limsup_{n\to\infty}
	\frac1n
	\log s(Z,n,\epsilon).
	\]
\end{definition}

\begin{definition}
[Kolmogorov–Sinai entropy]
Let $(X,\mathcal{B},\mu,T)$ be a measure-preserving dynamical system, 
i.e., $(X, \mathcal{B},\mu)$ is a probability space and $T :X\to X$ is measurable with $\mu\left(T^{-1}A\right)=\mu\left(A\right)$ for all $A\in\mathcal{B}$.  For a finite measurable partition $\mathcal{P}$ of $X$, define 
\begin{equation*}
	H_{\mu}(\mathcal{P}) = -\sum_{P\in\mathcal{P}} \mu\left(P\right) \log \mu\left(P\right), \quad \text{with } \ 0 \log 0 = 0.
\end{equation*}
The Kolmogorov-Sinai entropy of $T$ with respect to $\mathcal{P}$ is
\begin{equation*}
	h_{\mu}\left(T,\mathcal{P}\right)=\lim_{n\to \infty}\frac{1}{n}H_{\mu}\left(\bigvee_{k=0}^{n-1}T^{-k}\mathcal{P}\right),
\end{equation*}
and the Kolmogorov–Sinai entropy of $\left(T,\mu\right)$ is
\begin{equation*}
	h_{\mu}\left(T\right)=\sup_{\mathcal{P}}h_{\mu}\left(T,\mathcal{P}\right),
\end{equation*}
where the supremum is taken over all finite measurable partitions $\mathcal{P}$ of $X$.
\end{definition}
	\begin{definition} [Shannon entropy and differential entropy]
	For a discrete random variable $X$ taking values in a countable set $\mathcal{X}$, its Shannon entropy is 
	\begin{equation*}
		H\left(X\right) = -\sum_{x\in \mathcal{X}} \mathbb{P}\left(X=x\right)\log\mathbb{P}\left(X=x\right),\quad \text{with}\ 0 \log 0 = 0.
	\end{equation*}
	For a continuous random vector $Y\in \mathbb{R}^{d} $ with density $p\left(y\right)$, its differential entropy is 
	\begin{equation*}
		h\left(Y\right) = -\int_{\mathbb{R}^{d}} p\left(x\right) \log p\left(x\right) dx.
	\end{equation*}
	Conditional entropies $H(X\mid \cdot)$ and $h(Y\mid \cdot)$ are defined analogously 
	via the corresponding conditional distributions.
\end{definition}
\begin{definition} [Relative entropy (Kullback-Leibler divergence)]
	Let $P$ and $Q$ be two probability measures on the same measurable space $\left(\Omega,\mathcal{F}\right)$. If $P$ is absolutely continuous with respect to $Q$ (denotes $P \ll Q$), the relative entropy is defined as
	\begin{equation*}
		D_{\mathrm{KL}}\left(P\mid\mid Q\right)=\int_{\Omega}\log\left(\frac{dP}{dQ}\right)dP, 
	\end{equation*}
	where $dP/dQ$ is the Radon–Nikodym derivative  and we adopt the convention $0 \log 0 = 0$. If $P$ is not absolutely continuous with respect to $Q$, we set $D_{\mathrm{KL}}(P\mid Q)=+\infty$. Meanwhile,
	If $P$ and $Q$ are discrete distributions with probabilities $p(y)$, $q(y)$ on a countable set $\mathcal{Y}$, then
	\begin{equation*}
		D_{\mathrm{KL}}\left(P\mid\mid Q\right)=
		\sum_{y\in\mathcal{Y}}p\left(y\right)\log\frac{p\left(y\right)}{q\left(y\right)}.
	\end{equation*}
\end{definition}

\begin{definition} [Total variation distance] For two discrete probability distributions with probability mass functions $p$ and $q$ on the same countable set $\mathcal{X}$, the total variation distance is defined as
	\begin{equation*}
		\left\|p-q\right\|_{\mathrm{TV}}=\frac{1}{2}\sum_{x\in\mathcal{X}}\left|p\left(x\right)-q\left(x\right)\right|.
	\end{equation*}
	Equivalently, for the corresponding random variables $X$ and $Y$, we write $	\left\|X-Y\right\|_{\mathrm{TV}}=\left\|p-q\right\|_{\mathrm{TV}}$.
\end{definition}

\begin{definition} [Pinsker's inequality] 
	Let $P$ and $Q$ be two probability measures on the same measurable space $\left(\Omega,\mathcal{F}\right)$.
	Then
	\begin{equation*}
		\left\|P-Q\right\|_{\mathrm{TV}}\le \sqrt{\frac{1}{2}D_{KL}\left(P\mid\mid Q\right)}.
	\end{equation*}
\end{definition}

In the rest of this paper, we stress that for any $d$-dimensional vector $Z=(Z_{1},Z_{2},\cdots,Z_{d})$, we adopt the standard convention of treating such row vector notation as representing a column vector, unless otherwise specified. $\left\|\cdot\right\|_{d}$ denotes the Euclidean norm of $d$-dimensional. We denote $\mathcal{N}\left(\mu,\sigma^{2}\right)$ as the Gaussian distribution with expectation $\mu$ and variance $\sigma^{2}$. Let $\sim$ signify identical distribution, $\mathrm{Var}\left(X\right)$ represents the variance of a real random variable $X$, the indicator function of a set $A$ is denoted by $\mathbb{I}_{A}$, $\mathbf{1}_{d}$ denotes the $d$-dimensional identity matrix. We write 
$f\left(n\right)\asymp g\left(n\right)$ as $n\to \infty$ if there exist positive constants $c_{1},c_{2}$ such that $c_{1}\left|g\left(n\right)\right|\le \left|f\left(n\right)\right|\le c_{2} \left|g\left(n\right)\right|$ for all sufficiently large $n$.
Additionally, $\mathrm{Bern}\left(\cdot\right)$ represents the Bernoulli distribution and  $\left|M\right|=\mathrm{det}\left(M\right)$
denotes the determinant of a square matrix $M$.

\section{A Constrained Variational Principle for Shift Entropy}
In this section, we study an entropy variational problem associated with the fluctuation sequence
 $\left\{\tilde{\theta}_{n}\right\}$ through a path-space shift system. 
 Our goal is to establish a constrained variational principle for the invariant measures of this shift system under the asymptotic energy constraint introduced below. The continuous state space $\mathbb{R}^{d}$ and the global energy constraint place this problem beyond the classical finite-alphabet shift setting. We formulate the variational problem in a measure-theoretic framework based on shift-invariant probability measures and Kolmogorov–Sinai entropy, with weak compactness providing the key analytical tool.

To construct the associated dynamical system, we embed the main sequence into a 
path space equipped with the left shift transformation. We first consider the general $d$-dimensional setting and later specialize to $d=1$ for notational convenience. Define
	\begin{equation*}
	\tilde{\Xi}_{d}:=\left(\mathbb{R}^{d}\right)^{\mathbb{N}^{+}}
\end{equation*}
as the one-sided full shift space equipped with the product topology. The left shift operator $T:\tilde{\Xi}_{d}\to \tilde{\Xi}_{d}$ is defined by $\left(Tw\right)_{k}=w_{k+1}$. A point $x \in\tilde{\Xi}_{d}$ represents a complete fluctuation path, and its orbit under the shift is $\left\{T^{n}x\right\}_{n\ge0}$. Denote by $\mathcal{M}(\tilde{\Xi}_{d})$ the space of Borel probability measures endowed with the weak‑* topology, and by $\mathcal{M}(\tilde{\Xi}_{d}, T)$ the subset of $T$‑invariant measures. These invariant measures are considered as admissible probability laws on the path space and are not assumed to coincide with the original law of the fluctuation sequence.
		
To incorporate the energy constraint into the shift framework, we restrict the shift space to sequences that satisfy the same asymptotic quadratic bound.  	Since removing finitely many initial coordinates does not change the asymptotic quadratic average, the set $\mathcal{V}_{\infty}^{(d)}(\tilde{\sigma}^{2})$ is $T$‑invariant, i.e., if $x\in \mathcal{V}_{\infty}\left(\tilde{\sigma}^{2}\right)$, then $Tx\in \mathcal{V}_{\infty}\left(\tilde{\sigma}^{2}\right)$. Hence $(\mathcal{V}_{\infty}^{(d)}(\tilde{\sigma}^{2}), T)$ itself is a topological dynamical system. Lemma 3.1 and Proposition 3.1 below will show that any natural shift‑invariant measure obtained from the process $\left\{\hat{\theta}_{n}\right\}_{n\in \mathbb{N}^{+}}$ 
is supported on $\mathcal{V}_{\infty}^{(d)}(\tilde{\sigma}^{2})$. Consequently, the entropy analysis of the process can be conducted entirely within this constrained subsystem.

	For the entropy computation we need finite‑length approximations of $\mathcal{V}_{\infty}^{(d)}(\tilde{\sigma}^{2})$. For each $n\in \mathbb{N}^{+}$, we define
\begin{equation*}
	\mathcal{V}^{(d)}_{n}:=\left\{x^{(n)}=\left(x_{1},x_{2},\cdots,x_{n}\right)\in \mathbb{R}^{n\times d}:\frac{1}{n}\sum_{k=1}^{n}\left\|x_{k}\right\|_{d}^{2}\le E_{n}\right\},
\end{equation*} 
where $E_{n}:= n^{\beta_{n}}\left(\tilde{\sigma}^{2}+2\vartheta_{n}\right)$, $\left\{\vartheta_{n}\right\}_{n\in \mathbb{N}^{+}}$ is a fixed positive sequence tending to zero,  $\beta_{n}:=1/\left(n\log n\right)$ when $n\ge 2$ and $\beta_{n}=1$ when $n=1$.
Hence, for $n\ge 2$, the factor $n^{\beta_{n}}$ decreases monotonically to 1 as $n\to \infty$. Meanwhile, because $E_{n}\to \tilde{\sigma}^{2}$ (independently of the auxiliary sequences $\beta_{n}, \vartheta_{n}$), the set $\mathcal{V}_{n}$
consists of length‑$n$ blocks whose empirical second moment is at most $\tilde{\sigma}^{2}+o\left(1\right)$. the slight relaxation $E_{n}$ is introduced to obtain suitable compactness properties.
We equip $\mathcal{V}_{n}^{(d)}$ with an $\ell^{2}$ metric,
\begin{equation*}
	d_{n}^{(d)}\left(x,y\right) :=
	\sqrt{\frac{1}{E_{n}^{2}}\sum_{k=1}^{n}\left\|x_{k}-y_{k}\right\|_{d}^{2}},\ \ \ \ x,y\in\mathcal{V}^{(d)}_{n}.
\end{equation*}

The finite sets $\mathcal{V}_{n}^{(d)}$ will play the role of ``admissible words'' of length $n$ when we compute the topological entropy of the constrained system, analogous to counting words in a subshift of finite type. For the sake of clarity, we first carry out the entropy analysis in the one‑dimensional case ($d=1$), where notation is lighter. The extension to $d>1$ under the assumption of independent components is straightforward and will be indicated afterward. From now on we drop the superscript $(d)$ when $d=1$, writing simply $\mathcal{V}_{n},\mathcal{V}_{\infty},d_{n}$.

\subsection{Topological Entropy of the Energy-Constrained Shift}
This subsection presents
a variational principle for dynamics under an asymptotic energy constraint. The key is to approximate the infinite-dimensional constraint set $\mathcal{V}_{\infty}^{(d)}\left(\tilde{\sigma}^{2}\right)$ by a sequence of finite-dimensional sets $\left\{\mathcal{V}_{n}\right\}_{n\in \mathbb{N}^{+}}$, reducing the problem to a limit of classical variational principles.
We start with $d=1$. 

The classical topological entropy as in Definition 2.1 assumes a compact phase space. Our set $\mathcal{V}^{(d)}_{\infty}\left(\tilde{\sigma}^{2}\right)$ is non‑compact, and the natural metric on finite segments is the normalized $\ell ^{2}$ distance $d_{n}$ defined above. Therefore we adapt the definition as follows.

For $\epsilon>0$, let $\mathrm{sep}(n,\epsilon,\mathcal{V}_{n})$ be a maximal $\epsilon$-separated set in the metric space $\left(\mathcal{V}_{n},d_{n}\right)$,
its cardinality is denoted by $\left|\mathrm{sep}(n,\epsilon,\mathcal{V}_{n})\right|$. We introduce the following constrained topological entropy adapted to  the shift system $\left(\mathcal{V}_{\infty}\left(\tilde{\sigma}^{2}\right),T\right)$,
\begin{equation*}
	h_{\mathrm{top}}^{c}\left(T\right):=\lim_{\epsilon\to 0}\limsup_{n\to \infty}\frac{\log\left|\mathrm{sep}(n,\epsilon,\mathcal{V}_{n})\right|}{n}.
\end{equation*}

\begin{remark}
	Our definition of $h_{\mathrm{top}}^{c}(T)$ extends the classical notion of topological entropy, counting exponentially many distinguishable trajectories to the energy‑constrained and non compact space $\mathcal{V}_{\infty}^{(d)}\left(\tilde{\sigma}^{2}\right)$. Because the classical construction requires a compact phase space, we work instead with the finite-dimensional approximations $\mathcal{V}_{n}^{(d)}$, equipped with  
	metric $d_{n}$ that reflects the natural scaling of fluctuations. The resulting entropy rate quantifies the constrained capacity of the system, i.e., the exponential growth rate of admissible finite segments ($\mathcal{V}_{n}^{(d)}$ playing the role of ``words'').
	This adapted definition retains the key variational property of classical topological entropy: as stated in Theorem 3.1, $h_{\mathrm{top}}(\mathcal{V}^{(d)}_{\infty}\left(\tilde{\sigma}^{2}\right),T)$ equals the supremum of KS entropies over all shift-invariant measures supported on $\mathcal{V}_{\infty}^{(d)}\left(\tilde{\sigma}^{2}\right)$. Thus, while tailored to a non‑compact setting, our definition still provides an upper bound for the measure‑theoretic complexity of every admissible invariant measure.
\end{remark}

\begin{lemma}
	For any $n\ge 1$ and $\epsilon>0$, 
	\begin{equation*}
		\mathcal{V}_{n}\subset \bigcup_{x\in \mathrm{sep}\left(n,\epsilon,\mathcal{V}_{n}\right)}B_{n}\left(x,\epsilon\right),
	\end{equation*}
	where $	B_{n}\left(x,\epsilon\right):=\left\{y\in \mathcal{V}_{n}:d_{n}(x,y) \le \epsilon\right\}$.
\end{lemma}
\begin{proof}
	If some $z\in \mathcal{V}_{n}$ is not contained in any $B_{n}\left(x,\epsilon\right)$ with $x\in\mathrm{sep}\left(n,\epsilon,\mathcal{V}_{n}\right) $, 
	then $d_{n}\left(z,x\right)>\epsilon$ for all $x$, contradicting the maximality of that $\epsilon$-separated set. 
\end{proof}

\begin{proposition}
	Let $\mathrm{Vol}(\cdot)$ denote the $n$-dimensional Lebesgue measure. For the set $\mathcal{V}_{n}\subset\mathbb{R}^{n}$,
	\begin{equation*}
		\lim_{\epsilon\to 0}\limsup_{n\to \infty}\frac{1}{n}\left|\log\operatorname{Vol}\left(\mathcal{V}_{n}\right)-\log\left|\mathrm{sep}(n,\epsilon,\mathcal{V}_{n})\right|\right|=0.
	\end{equation*} 
\end{proposition}
\begin{proof}
	According to Lemma 3.1,  	the $\epsilon$-balls centered at points of $\mathrm{sep}\left(n,\epsilon,\mathcal{V}_{n}\right)$ cover  $\mathcal{V}_{n}$, i.e., 
	\begin{align*}
		\mathrm{Vol}\left(\mathcal{V}_{n}\right)\le \sum_{x\in\mathrm{sep}\left(n,\epsilon,\mathcal{V}_{n}\right) }	\mathrm{Vol}\left(B_{n}\left(x,\epsilon\right)\right)
		\le \mathrm{sep}(n,\epsilon,\mathcal{V}_{n})\cdot\max_{x\in \mathrm{sep}(n,\epsilon,\mathcal{V}_{n})}\mathrm{Vol}\left(B_{n}\left(x,\epsilon\right)\right).
	\end{align*}
	Taking any $x\in \mathrm{sep}\left(n,\epsilon,\mathcal{V}_{n}\right)$ and $y\in B_{n}\left(x,\epsilon\right)$, we have $d^{2}_{n}\left(x,y\right)=\sum_{k=1}^{n}\left(x_{k}-y_{k}\right)^{2}\le \epsilon^{2}E^{2}_{n}$.
	By Cauchy–Schwarz inequality, $	\left|x_{k}-y_{k}\right|\le \sqrt{\sum_{k=1}^{n}\left(x_{k}-y_{k}\right)^{2}}\le \epsilon E_{n}=n^{\beta_{n}/2}\epsilon\sqrt{\tilde{\sigma}^{2}+2\vartheta_{n}}$. 
	Thus, the volume of $B_{n}\left(x,\epsilon\right)$ can be bounded by that of a cube, i.e.,  $\mathrm{Vol}\left(B_{n}\left(x,\epsilon\right)\right)\le \left(2n^{\beta_{n}/2}\epsilon\sqrt{\tilde{\sigma}^{2}+2\vartheta_{n}}+1\right)^{n}$. 
	Thus, for sufficiently small $\epsilon>0$, under the fact that $n^{\beta_{n}}\to 1$ and $\vartheta_{n}\to 0$ as $n\to \infty$,
	we obtain
	\begin{align*}
		\frac{1}{n}\log\mathrm{Vol}\left(B_{n}\left(x,\epsilon\right)\right)&=\log\left(2n^{\beta_{n}/2}\epsilon\sqrt{\tilde{\sigma}^{2}+2\vartheta_{n}}+1\right)\\
		&=2n^{\beta_{n}/2}\epsilon\sqrt{\tilde{\sigma}^{2}+2\vartheta_{n}}+o\left(2n^{\beta_{n}/2}\epsilon\sqrt{\tilde{\sigma}^{2}+2\vartheta_{n}}+1\right).	
	\end{align*}
	This indicates that as $n\to \infty$,
	\begin{align*}
		&\frac{1}{n}\left(\log\mathrm{Vol}\left(\mathcal{V}_{n}\right)-\log\left|\mathrm{sep}\left(n,\epsilon,\mathcal{V}_{n}\right)\right|\right)\\
		\le& \frac{1}{n}\left(\log\left(\sum_{x\in \mathrm{sep}\left(n,\epsilon,\mathcal{V}_{n}\right)}\mathrm{Vol}\left(B_{n}\left(x,\epsilon\right)\right)\right)-\log\left|\mathrm{sep}\left(n,\epsilon,\mathcal{V}_{n}\right)\right|\right)\nonumber\\
		\le & \frac{1}{n}\log\max_{x\in
			\mathcal{V}_{n}}\mathrm{Vol}\left(B_{n}\left(x,\epsilon\right)\right)\nonumber\\
		\le&\log\left(2n^{(\beta_{n}+1)/2}\epsilon\sqrt{\tilde{\sigma}^{2}+2\delta_{n}}+o\left(n^{(\beta_{n}+1)/2}\epsilon\sqrt{\tilde{\sigma}^{2}+2\delta_{n}}\right)\right)\nonumber\\
		\to&2\epsilon\tilde{\sigma}+o\left(\epsilon\right).
	\end{align*}
	Therefore, we have
	\begin{equation}
		\lim_{\epsilon\to0}\lim_{n\to \infty}\frac{1}{n}\left(\log\left|\mathcal{V}_{n}\right|-\log\left|\mathrm{sep}\left(n,\epsilon,\mathcal{V}_{n}\right)\right|\right)\le 
		\lim_{\epsilon\to0}\left(2\epsilon\tilde{\sigma}+o\left(\epsilon\right)\right)=0.
	\end{equation}
	Similarly, considering  the $\epsilon$-balls $B_{n}(x,\epsilon/2)$ are disjoint for distinct $x\in\mathrm{sep}(n,\epsilon,\mathcal{V}_{n})$ (because $d_{n}(x,y)>\epsilon$). Hence
	\begin{equation*}
		\operatorname{Vol}\left(\mathcal{V}_{n}\right)\ge \sum_{x\in\mathrm{sep}(n,\epsilon,\mathcal{V}_{n})}\operatorname{Vol}\bigl(B_{n}(x,\frac{\epsilon}{2})\bigr)
		\ge \left|\mathrm{sep}(n,\epsilon,\mathcal{V}_{n})\right|\cdot\inf_{x\in\mathcal{V}_{n}}\operatorname{Vol}\bigl(B_{n}(x,\frac{\epsilon}{2})\bigr),
	\end{equation*}
	where $\mathrm{Vol}\left(B_{n}\left(x,\epsilon/2\right)\right)$ is always larger than that of a smaller cube $\left(n^{\beta_{n}/2}\epsilon\sqrt{\tilde{\sigma}^{2}+2\vartheta_{n}}/2\right)^{n}$.
	Taking logarithms, dividing by $n$, and letting $n\to\infty$ then $\epsilon\to0$, we derive              
	\begin{equation}
		\lim_{\epsilon\to0}\limsup_{n\to\infty}\frac{1}{n}\log\left|\mathrm{sep}(n,\epsilon,\mathcal{V}_{n})\right|
		\le \lim_{n\to\infty}\frac{1}{n}\log\operatorname{Vol}\left(\mathcal{V}_{n}\right).
	\end{equation}
	Combining (3.1)-(3.2), we finish our proof.
\end{proof}

\begin{proposition}
	For $d\ge 1$, the topological entropy $	h_{\mathrm{top}}^{c}\left(T\right)=d\log\left(2\pi\mathrm{e}\tilde{\sigma}^{2}\right)/2$.
\end{proposition}
\begin{proof}
By Stirling's formula, we obtain
	\begin{align*}
		\lim_{n\to \infty}\frac{1}{n}\log\mathrm{Vol}\left(\mathcal{V}_{n}\right)
		&= \lim_{n\to \infty}\frac{1}{n}\log\left(\frac{\pi^{n/2}\left(\sqrt{n^{1+\beta_{n}}\left(\tilde{\sigma}^{2}+2\vartheta_{n}\right)}\right)^{n}}{\sqrt{\pi n}\left(\frac{n}{2\mathrm{e}}\right)^{n/2}}\right)\\
		&=\lim_{n\to \infty}\frac{1}{2}\log\left(2\pi\mathrm{e}\left(\tilde{\sigma}^{2}+2\vartheta_{n}\right)\right)+\frac{1}{2}\lim_{n\to \infty}\beta_{n}\log n\\
		&=\frac{1}{2}\log\left(2\pi\mathrm{e}\tilde{\sigma}^{2}\right).
	\end{align*}
	Next, combining it with Proposition 3.1, we obtain that
	\begin{equation*}
		h_{\mathrm{top}}^{c}\left(T\right)=\lim_{\epsilon\to 0}\limsup_{n\to \infty}\frac{\log\left|\mathrm{sep}(n,\epsilon,\mathcal{V}_{n})\right|}{n}
		=\lim_{\epsilon\to 0}\limsup_{n\to \infty}\frac{\log\mathrm{Vol}\left(\mathcal{V}_{n}\right)}{n}=\frac{1}{2}\log\left(2\pi\mathrm{e}\tilde{\sigma}^{2}\right).	
	\end{equation*}
	Finally, for the $d$-dimensional case with independent components,
	the corresponding topological entropy is $d$ times the one-dimensional result.
	This completes the proof.
\end{proof}

	\subsection{Measure-theoretic Entropy of the Energy-Constrained Shift}
In classical symbolic dynamics, entropy is typically computed using finite partitions of a finite alphabet. For our continuous alphabet $\mathbb{R}^{d}$, we instead consider partitions induced by cubic grids $\mathcal{Q}_{\epsilon}=\left\{\text{cubes of side length}\ \epsilon\right\}$  and pass to the limit $\epsilon\to 0$. Although this approach is analytic rather than combinatorial, it preserves the dynamical idea of counting distinguishable trajectories.

In our setting, $\left(X,T\right)=\left( \tilde{\Xi}_{d} ,T \right)$ where $T$ is the left shift. Let $Z_{k} : \tilde{\Xi}_{d}\to \mathbb{R}^{d}$ be the $k$-th coordinate projection, $Z_{k}(x) = x_{k}$. For $\epsilon>0$, let 	$\mathcal{P}_{\epsilon} := \left\{Z_{0}^{-1}\left(A\right):A\in \mathcal{Q}_{\epsilon}\right\}$  be the corresponding partition of $\tilde{\Xi}$. Then $T^{-k}\mathcal{P}_{\epsilon}$ is the partition according to the $k$-th coordinate. As $\epsilon\to 0$, the family $\left\{\mathcal{P}_{\epsilon}\right\}$ generates the Borel $\sigma$‑algebra under $T$, hence by the KS theorem, 	
\begin{equation*}
	h_{\mu}\left(T\right)=\lim_{\epsilon\to 0}	h_{\mu}\left(T,\mathcal{P}_{\epsilon}\right)=\lim_{\epsilon\to 0}\lim_{N\to \infty}\frac{1}{N}H_{\mu}\left(\bigvee_{k=0}^{N-1}T^{-k}\mathcal{P}_{\epsilon}\right).
\end{equation*}
We are particularly interested in 
$T$-invariant probability measures supported on $\mathcal{V}^{(d)}_{\infty}\left(\tilde{\sigma}^{2}\right)$. For such a measure $\mu$,  the measure-theoretic entropy $h_{\mu}\left(T\right)$ quantifies the average unpredictability per iteration along $\mu$-typical orbits.

\begin{proposition}
	For any shift-invariant measure $\mu$ supported on $\mathcal{M}\left(\mathcal{V}^{(d)}_{\infty}\left(\tilde{\sigma}^{2}\right),T\right)$,
	\begin{equation*}
		h_{\mu}\left(T\right)\le\frac{d}{2} \log\left(2\pi\mathrm{e}\tilde{\sigma}^{2}\right)	.
	\end{equation*}
\end{proposition}
\begin{proof}
	We give the argument for $d=1$, the extension to $d>1$ follows by applying the one-dimensional bound to each coordinate and using the sub-additivity of entropy. 	Given any measure $\mu\in\mathcal{M}\left(\mathcal{V}_{\infty}\left(\tilde{\sigma}^{2}\right),T\right)$, we denote by $Y_{k}:\tilde{\Xi}\to\mathbb{R}$ the $k$-th coordinate projection with $Y_{k}\left(y\right)=y_{k}$. Since $\mu$ is $T$-invariant, the process $\left\{Y_{n}\right\}_{n\in\mathbb{N}^{+}}$ is stationary.   Moreover, by the definition of $\mathcal{V}_{\infty}\left(\tilde{\sigma}^{2}\right)$,
	\begin{equation*}
		\limsup_{n\to\infty}\frac{1}{n}\sum_{k=1}^{n}Y_{k}^{2}\left(y\right)\le \tilde{\sigma}^{2},\ \ \text{for}\ \mu-\text{a.e.}\ y. 
	\end{equation*}
	Applying Birkhoff's ergodic theorem to the observable $y\mapsto y_{1}^{2}$, we obtain a $T$-invariant limit 
	\begin{equation*}
		\sigma^{2}\left(y\right):=\lim_{n\to \infty}\frac{1}{n}\sum_{k=1}^{n}Y_{k}^{2}\left(y\right)\le \tilde{\sigma}^{2},\ \ \text{for}\ \mu-\text{a.e.}\ y.
	\end{equation*}
	Integrating gives $\mathbb{E}_{\mu}\left[Y_{1}^{2}\right]=\int_{\tilde{\Xi}}\sigma^{2}\left(y\right)d\mu\left(y\right)\le \tilde{\sigma}^{2}$.  Thus every invariant measure on $\mathcal{V}_{\infty}\left(\tilde{\sigma}^{2}\right)$ has a second moment bounded by $\tilde{\sigma}^{2}$. 
	
	Let $\mu_{1}$ be the one-dimensional marginal distribution of $Y_{1}$
	under $\mu$. If $\mu_{1}$ is absolutely continuous with respect to Lebesgue measure, denote its density by $p$ and its differential entropy is  $	h_{\mu}\left(Y_{1}\right):=-\int_{\mathbb{R}} p\left(y\right) \log p\left(y\right) dy$.
	If $\mu_{1}$ is singular, we set $h_{\mu}\left(X_{1}\right)=-\infty$. A basic information-theoretic fact 	states that for any random variable $Z$ with $\mathbb{E}\left[Z^{2}\right]\le\sigma^{2} $, $h\left(Z\right)\le \log\left(2\pi\mathrm{e}\sigma^{2}\right)/2$, with equality iff $Z$ is Gaussian with variance $\sigma^{2}$.
	Applying this to $Z=Y_{1}$ yields $h_{\mu}\left(Y_{1}\right)\le \log\left(2\pi\mathrm{e}\mathbb{E}_{\mu}\left[Y_{1}^{2}\right]\right)/2\le \log\left(2\pi\mathrm{e}\tilde{\sigma}^{2}\right)/2$. 
	If $\mu_{1}$ is singular, the inequality holds trivially because the left‑hand side equals $-\infty$.
	
	For each $n\ge1$, write $Y_{1}^{n}=\left(Y_{1},Y_{2},\cdots,Y_{n}\right)$.  Stationarity and the chain rule for differential entropy give $h_{\mu}\left(Y_{1}^{n}\right)\le \sum_{k=1}^{n}h_{\mu}\left(Y_{k}\mid Y_{1}^{k-1}\right)\le nh_{\mu}\left(Y_{k}\mid Y_{1}^{k-1}\right)\le nh_{\mu}\left(Y_{k}\right)$, where $h_{\mu}\left(Y_{k}\mid Y_{1}^{k-1}\right)$ denotes conditional differential entropy. 	Therefore, for a stationary process $\left\{Y_{k}\right\}_{k=1}^{n}$ with finite marginal entropy, we have $h_{\mu}\left(T\right)=\lim_{n\to\infty}h_{\mu}\left(Y_{1}^{n}\right)/n\le h_{\mu}\left(Y_{1}\right) \le \log\left(2\pi\mathrm{e}\tilde{\sigma}^{2}\right)/2$.
	The identity remains valid (by a standard truncation argument) even when $h_{\mu}\left(Y_{1}\right)=-\infty$, in that case both sides equal $-\infty$.
	
	The case $d>1$ is obtained by applying the one-dimensional result to each component $Y_{1}^{(i)}$ of $Y_{1} = (Y_{1}^{(1)},\cdots,Y_{1}^{(d)})$. 	We omit this part for simplicity and finish our proof now.
\end{proof}

We now construct a shift-invariant measure in the one-dimensional case that attains the entropy upper bound $\log(2\pi \mathrm{e}\tilde{\sigma}^{2})/2$. The extension to $d > 1$ follows by taking the product measure, yielding the entropy rate $d\log(2\pi \mathrm{e}\tilde{\sigma}^{2})/2$ which matches the upper bound.

For each n, let $K_{n}$ be uniformly distributed on $\left\{1,2,\cdots,n\right\}$ and independent of the finite segment  $(\hat{\theta}_{1},\hat{\theta}_{2},\cdots,\hat{\theta}_{n})$. Define the periodic extension \begin{equation*}
	V^{(n)} = (V_{1}^{(n)}, V_{2}^{(n)}, \cdots, V_{n}^{(n)}, V_{1}^{(n)},V_{2}^{(n)}, \cdots),
\end{equation*}
where $V_{t}^{(n)}:= \hat{\theta}_{\left(K_{n}+t-1\right)\ \mathrm{mod}\ n+1}$.
Let $\mu_{n}$ be the distribution of $V^{(n)}$ on $\tilde{\Xi}$, i.e., for any Borel set $A \subset \tilde{\Xi}$, $\mu_{n}\left(A\right)=\mathbb{P}\left(V^{(n)}\in A\right)$. 
Because each $V^{(n)}$ is $n$-periodic and fulfilled with uniformly bounded second moment in $n$, we can choose a sequence of positive numbers $\left\{s_{n}\right\}_{n\in\mathbb{N}^{+}}$ such that $\sum_{k=1}^{\infty}s_{k}<\infty$. For any $\epsilon > 0$, we define
\begin{equation*}
	K_{\epsilon}:=\left\{x\in \tilde{\Xi}: \sum_{k=1}^{\infty}s_{k}x_{k}^{2}\le J_{\epsilon}\right\},
\end{equation*}
where $J_{\epsilon}>0$  will be chosen momentarily. 
By Markov's inequality, there exists some $C_{1}>0$ such that 
\begin{equation*}
	\mu_{n}\left(\sum_{k=1}^{\infty}s_{k}x_{k}^{2}>J_{\epsilon}\right)\le \frac{1}{J_{\epsilon}}\sum_{k=1}^{\infty}s_{k}\mathbb{E}_{\mu_{n}}\left[x_{k}^{2}\right]\le \frac{C_{1}\sum_{k=1}^{\infty}s_{k}}{J_{\epsilon}}.
\end{equation*}
This leads to the choice of $J_{\epsilon}$ such that $C_{1} \sum_{k=1}^{\infty} s_{k}/J_{\epsilon}< \epsilon$. Then for all $n$,  $\mu_{n}\left(K_{\epsilon}\right)\ge 1-\epsilon$. 
Meanwhile, each $K_{\epsilon}$ is compact because it is a closed subset of the compact product $\prod_{k=1}^{\infty} [-\sqrt{J_{\epsilon}/s_{k}}, \sqrt{J_{\epsilon}/s_{k}}]$ from Tychonoff theorem.
Hence, $\left\{\mu_{n}\right\}_{n\in\mathbb{N}^{+}}$ is tight on the Polish space $\tilde{\Xi}$, and by Prokhorov's theorem, it contains a weakly convergent subsequence.
Passing to this subsequence (and relabelling $n$), we obtain a weak limit $\mu_{\infty}:=\lim_{n\to \infty}\mu_{n}$.

	\begin{lemma}
	The measure $\mu_{\infty}$ is supported on the energy-constrained set,
	$\mu_{\infty}\left(\mathcal{V}_{\infty}\left(\tilde{\sigma}^{2}\right)\right)=1$.
\end{lemma}
\begin{proof}
	For each fixed $n$, Birkhoff's ergodic theorem applied to $\mu_{n}$ gives that 
	\begin{equation*}
		\lim_{m\to \infty}R_{m}\left(x\right):=\lim_{m\to \infty}\frac{1}{m}\sum_{k=1}^{m}x_{k}^{2}=E_{\mu_{n}}\left[x_{1}^{2}\right]
		= \frac{1}{n}\sum_{k=1}^{n}\mathbb{E}\left[\left|\hat{\theta}_{k}\right|^{2}\right]\le \tilde{\sigma}^{2}+o\left(1\right),\ \ \mu_{n}-\text{a.e.}
	\end{equation*}	
	The function $R_{m}(x)$ is continuous on $\tilde{\Xi}$ (it depends only on finitely many coordinates). Hence, the upper‑limit function $\mathcal{R}(x):= \limsup_{m\to\infty} R_{m}(x)$ 
	satisfies $\mu_{n}(\left\{x:R\left(x\right)\le \tilde{\sigma}^{2}+\epsilon\right\}) = 1$ for all sufficiently large $n$ (depending on any $\epsilon>0$).
	
	The set $B_{\epsilon} := \left\{x:\mathcal{R}(x)\le \tilde{\sigma}^{2}+\epsilon\right\}$ is closed (because $\mathcal{R}$ is upper‑semicontinuous).
	Tightness of $\left\{\mu_{n}\right\}_{n\in\mathbb{N}^{+}}$ provides a compact set $K_{\delta} \subset \tilde{\Xi}$ with  $\mu_{n}(K_{\delta}) \ge 1-\delta$ for all $n\in \mathbb{N}^{+}$. The intersection $B_{\epsilon} \bigcap K_{\delta}$ satisfies that
	\begin{equation*}
		B_{\epsilon}\bigcap K_{\delta}=\left\{x\in K_{\delta}:\mathcal{R}\left(x\right)\le\tilde{\sigma}^{2}+\epsilon\right\}=K_{\delta}\setminus\left\{x\in K_{\delta}:\mathcal{R}\left(x\right)>\tilde{\sigma}^{2}+\epsilon\right\}
	\end{equation*}
	is compact (closed subset of a compact set). By the Portmanteau theorem, \begin{equation*}
		\mu_{\infty}\left(B_{\epsilon}\bigcap K_{\delta}\right)\ge\limsup_{n\to \infty}\mu_{n}\left(B_{\epsilon}\bigcap K_{\delta}\right)\ge1-\delta.  \end{equation*}
	Letting $\delta \to 0$ gives $\mu_{\infty}\left(B_{\epsilon}\right)=1$ for every $\epsilon>0$. Finally, 
	we take $\epsilon_{l}=1/l$ and intersect over $l\in \mathbb{N}^{+}$ and yield 
	\begin{equation*}
		\bigcap_{l=1}^{\infty}B_{1/l}=\bigcap_{l=1}^{\infty}\left\{x:\limsup_{n\to \infty} R_{n}\left(x\right)\le \tilde{\sigma}^{2}+\frac{1}{l}\right\}=\left\{x:\limsup_{n\to \infty} R_{n}\left(x\right)\le \tilde{\sigma}^{2}\right\}=\mathcal{V}_{\infty}\left(\tilde{\sigma}^{2}\right).
	\end{equation*}
	Hence, $\mu_{\infty}\left(\bigcap_{l=1}^{\infty}B_{1/l}\right)=\mu_{\infty}\left(\mathcal{V}_{\infty}\left(\tilde{\sigma}^{2}\right)\right)=1$.
	We finish our proof now. 
\end{proof}
The following results attains the maximal entropy rate among all shift‑invariant probability measures supported on $\mathcal{V}_\infty^{(d)}(\tilde{\sigma}^{2})$.	

\begin{proposition}
	Let $\mu^{(d)}_{\infty}:=\mu_{\infty}^{\otimes d}$.
	Then $\mu_{\infty}^{(d)}$ belongs to $ \mathcal{M}\left(\mathcal{V}^{(d)}_{\infty}\left(\tilde{\sigma}^{2}\right),T\right)$, and its entropy rate is $	h_{\mu^{(d)}_{\infty}}\left(T\right)=d\log\left(2\pi\mathrm{e}\tilde{\sigma}^{2}\right)/2$. 
\end{proposition}
\begin{proof}
	Consider $d=1$ first. For any $M>0$, define the truncation $f_{M}(z)=\operatorname{sgn}(z)\min\left\{|z|,M\right\}$ at level $M>0$, which is continuous and bounded. Under the weak convergence of $\mu_{n}$ to $\mu_{\infty}$, we have $\int f_{M}d\mu_{n}-\int f_{M}d\mu_{\infty}\to 0$ as $n\to \infty$.
	Because $|x-f_{M}(x)|\le |x|\cdot\mathbb{I}_{{|x|>M}}$,  applying this deduction to $\mu_{n}$
	yields that 
	\begin{align*}
		\int	\left|f_{M}\left(x\right)-x\right|d\mu_{n}\left(x\right)	
		\le \sqrt{\int x^{2}d\nu_{k}\left(x\right)}\cdot \sqrt{\mathbb{P}\left(\left|\hat{\theta}_{k}\right|>M\right)}
		\le \frac{\mathbb{E}_{\mu_{n}}\left[x_{1}^{2}\right]}{M}.
	\end{align*}
	The same estimation follows if we substitute $\mu_{\infty}$ with $\mu_{n}$. Therefore, for any $\epsilon>0$, given that $M$ and then $N$ are both large enough, for all $n>N$, 
	\begin{align*}
		\left|\int xd\mu_{\infty}\left(x\right)-\int xd\mu_{n}\left(x\right)\right|
		&	\le \left|\int xd\mu_{\infty}\left(x\right)-\int f_{M}\left(x\right)d\mu_{\infty}\left(x\right)\right|\nonumber\\
		&\qquad\qquad+\left|\int f_{M}\left(x\right)d\mu_{\infty}\left(x\right)-\int f_{M}\left(x\right)d\mu_{n}\left(x\right)\right|\nonumber\\
		&\qquad\qquad\qquad\qquad+\left|\int f_{M}\left(x\right)d\mu_{n}\left(x\right)-\int xd\bar{\mu}_{n}\left(x\right)\right|\\
		&<\epsilon.
	\end{align*}
	In other words, we have
	\begin{equation*}
		\int xd\mu_{\infty}\left(x\right)=\lim_{n\to \infty}\int xd\mu_{n}\left(x\right)=\lim_{n\to \infty}\frac{1}{n}\sum_{k=1}^{n}\mathbb{E}\left[\hat{\theta}_{k}\right],
	\end{equation*}
	which is zero by the centering assumption.
	The variance is handled analogously. Define
	\begin{equation*}
		g_{M}\left(x\right)=\left\{
		\begin{aligned}
			& \left(x-\int yd\mu_{\infty}\left(y\right)\right)^{2},\ \ \mathrm{for}\ \left|x-\int yd\mu_{\infty}\left(y\right)\right|\le M,  \\ 
			&M, \ \ \text{othersise}.
		\end{aligned}
		\right. 
	\end{equation*}
	Repeating the same three-term splitting argument with $g_{M}$ in place of $x$, we obtain for any $\epsilon>0$, there exist $M_{0}=M_{0}(\epsilon)$ and $N_{0}=N_{0}(\epsilon)$ such that whenever $M>M_{0}$ and $n>N_{0}$,
	\begin{equation*}
		\left|\int\left(x-\int yd\mu_{\infty}\left(y\right)\right)^{2}d\mu_{\infty}\left(x\right)-\int\left(x-\int yd\mu_{\infty}\left(x\right)\right)^{2}d\mu_{n}\left(x\right)\right|
		<\epsilon.	
	\end{equation*}
	Hence, we have
	\begin{align*}
		\int \left(x-\int yd\mu_{\infty}\left(y\right)\right)^{2}d\mu_{\infty}\left(x\right)
		=\lim_{n\to \infty}\frac{1}{n}\sum_{k=1}^{n}\mathbb{E}\left[\left(\hat{\theta}_{k}-\lim_{m\to \infty}\frac{1}{m}\sum_{j=1}^{m}\mathbb{E}\left[\hat{\theta}_{j}\right]\right)^{2}\right],
	\end{align*}
	which simplifies to
	\begin{equation*}
		\int x^{2}d\mu_{\infty}\left(x\right)=\lim_{n\to \infty}\frac{1}{n}\sum_{k=1}^{n}\mathbb{E}\left[\hat{\theta}_{k}^{2}\right]=\tilde{\sigma}^{2}.
	\end{equation*}
	
	Fix an integer $m\ge 1$ and indices $t_{1},\dots,t_{m}$. 
	For $n$ large enough, under $\mu_{n}$, the random vector $(x_{t_{1}},\dots,x_{t_{m}})$ 
	is obtained by selecting $m$ distinct coordinates from $(\hat{\theta}_{1},\cdots,\hat{\theta}_{n})$ uniformly at random 
	(due to the random starting point $K_{n}$). 
	Since $\left\{\hat{\theta}_{n}\right\}_{n\in \mathbb{N}^{+}}$ 
	satisfies
	$\sum_{k=1}^n\text{Var}\left(\hat{\theta}_{k}\right)/n\to\tilde{\sigma}^{2}$, 
	a multivariate central limit theorem implies that
	$	\mathrm{Law}_{\mu_{n}}(x_{t_{1}},\cdots,x_{t_{m}}) \to\mathcal{N}(0, \tilde{\sigma}^{2} \mathbf{1}_{m})$ weakly
	as $n\to\infty$. Consequently, the finite $m$-dimensional marginal of $\mu_\infty$ is $\mathcal{N}(0,\tilde{\sigma}^2 \mathbf{1}_{m})$.
	By the Kolmogorov extension theorem, there is a unique measure on $\tilde{\Xi}$ with these 
	finite-dimensional marginals, namely the product measure $\mu^{\ast}:=\bigotimes_{k\in\mathbb{N}^{+}} \mathcal{N}(0,\tilde{\sigma}^2)$, that is, $\mu^{\ast}=\mu_{\infty}$ are both stationary Gaussian measures with the same mean zero and variance $\tilde{\sigma}^{2}$. For such a process the entropy is expressed as 
	\begin{equation*}
		h_{\mu_{\infty}}\left(T\right)=h_{\mu^{\ast}}\left(T\right)=\frac{1}{2}\log\left(2\pi\mathrm{e}\tilde{\sigma}^{2}\right).
	\end{equation*}
	For any finite indices $J_{m}:=\left\{t_{1},t_{2},\cdots,t_{m}\right\}\subset\mathbb{N}$,  
	define the project $\pi_{J_{m}}:\tilde{\Xi}\to\mathbb{R}^{m}$ by $\pi_{J_{m}}(y)=(y_{t_{1}},\dots,y_{t_{m}})$, and let $T$ be the left shift $(Ty)_{k}=y_{k+1}$, $\mu_{J_{m}}=\mathcal{N}\left(0,\tilde{\sigma}^{2}\mathbf{1}_{m}\right)$ is the corresponding finite-dimensional marginal. For any cylinder set $A=\pi_{J_{m}}^{-1}(B)$ with $B\subset\mathbb{R}^{m}$, we obtain that
	\begin{equation*}
		T^{-1}A=\left\{y:Ty\in A\right\}=\left\{y:\left(y_{t_{1}+1},y_{t_{2}+1},\cdots,y_{t_{m}+1}\right)\in B\right\}=\pi^{-1}_{J_{m}+1}\left(B\right),
	\end{equation*}
	where $J_{m}+1=\left\{t_{1}+1,t_{2}+1,\cdots,t_{m}+1\right\}$. Therefore,
	\begin{eqnarray*}
		\mu^{\ast}\left(T^{-1}A\right)=\mu^{\ast}\left(\pi^{-1}_{J_{m+1}}\left(B\right)\right)=
		\mu_{J_{m+1}}\left(B\right).\end{eqnarray*}
	Because the projection of $\mu_{J_{m}+1}$ onto the first $m$ dimensions is the same as that of $\mu_{J_{m}+1}\left(B\right)=\mu_{J_{m}}\left(B\right)=\mu^{\ast}\left(A\right)$. Thus $\mu^{\ast}$ is $T$-invariant (and non‑degenerate because its marginals are Gaussian with positive variance). 
	Consequently, the weak limit $\mu_{\infty}$ obtained earlier coincides with $\mu^{\ast}$, belonging to $\mathcal{M}\left(\mathcal{V}_{\infty}\left(\tilde{\sigma}^{2}\right),T\right)$, and therefore
	\begin{equation*}
		h_{\mu_{\infty}}\left(T\right)=h_{\mu^{\ast}}\left(T\right)=\frac{1}{2}\log\left(2\pi\mathrm{e}\tilde{\sigma}^{2}\right).
	\end{equation*}
	For the $d$-dimensional product measure $\mu_{\infty}^{(d)}=\mu_{\infty}^{\otimes d}$, independence of the coordinates gives that
	\begin{equation*}
		h_{\mu_{\infty}^{(d)}}\left(T\right)=\sum_{j=1}^{d}h_{\mu_{\infty}}\left(T\right)=\frac{d}{2}\log\left(2\pi\mathrm{e}\tilde{\sigma}^{2}\right),
	\end{equation*} 
	which coincides with the upper bound established in Proposition 3.3. We finish our proof.
\end{proof}

Now we establish the trajectory-level entropy characterization associated with the constrained fluctuation dynamics.
	\begin{theorem}
	For $d\ge1$, the equality
	\begin{equation*}
		\sup_{\mu\in \mathcal{M}\left(\mathcal{V}^{(d)}_{\infty}\left(\tilde{\sigma}^{2}\right),T\right)}h_{\mu}\left(T\right)=h_{\mathrm{top}}^{c}=\frac{d\log\left(2\pi\mathrm{e}\tilde{\sigma}^{2}\right)}{2}
	\end{equation*}
	holds, 
	and the supremum is attained by the product measure $\mu^{(d)}_{\infty}$.
\end{theorem}
\begin{remark}
	The quantity
	$
h_{\mathrm{top}}^{c}
	(T)
	$
	measures the exponential complexity of admissible fluctuation paths
	under the prescribed asymptotic energy constraint. Through this constrained
	variational principle, this complexity is equivalently characterized
	by the maximal information production rate among invariant probability
	measures supported on the constrained path space.
\end{remark}

\section{Strong Entropy Convergence of Accumulated Fluctuations}
This section establishes the information-theoretic foundation for the Gaussian limit. We prove that, under the non-stationary dynamics, the entropy of the macroscopic observable $S_{n}$ from model (1.1) converges strongly to its continuous (Gaussian) limit, extending classical entropy convergence to our setting and bridging the discrete analysis with the continuum. Note that the value of each $\tilde{\theta}_{n}$ exactly depends on parameter $l$. In the context of dealing with it, we temporarily assume $l$ to be fixed, 
along with related quantities, which will be treated without explicit $l$-dependence in our notation. However, when we return to the original continuous system, the dependence on $l$ will be reinstated, and we will emphasize it through appropriate superscripts ``$\left(l\right)$''.

Recall that for a discrete random variable $Y$ defined on $\Xi_{i}$ for any $i=1,2,\cdots,d$ with probability mass function $p\left(y\right)=\mathbb{P}\left(Y=y\right)$ (with expectation $\mu$ and finite variance $\sigma^{2}$), its relative entropy to a quantized Gaussian $q$ is $D_{KL}\left( p\mid \mid q\right)	=\sum_{y}p\left(y\right)\log\left( p\left(y\right)/q\left(y\right)\right)$, 
where $q$ is derived from the Gaussian density $\phi$ of $\mathcal{N}\left(\mu,\sigma^{2}\right)$ via 
\begin{equation*}
	q\left(a+kl\right)=\int_{a+kl}^{a+\left(k+1\right)l}\phi\left(y\right)dy,\ \ \ \ k\in \mathbb{Z}.
\end{equation*}	
Thus, it can be immediately calculated that $	\log\phi\left(x\right)=-\left(x^{2}+\log\left(2\pi\right)\right)/2$. When the second argument of $D_{KL}$
is the quantized Gaussian distribution associated with $Y$
as defined above, we shall simply write $D_{KL}\left(Y\right)$ instead of 
$D_{KL}\left( p\mid \mid q\right)$. The same abbreviated notation will be used for other random variables appearing later.

\begin{lemma}The partial sum $S_{n}$ and the standardised sum $S_{n}':= S_{n}/s_{n}$ satisfy that, as $n\to \infty$,
	\begin{equation*}
		D_{KL}\left(S_{n}'\right)=\frac{1}{2}\log\left(2\pi\mathrm{e}\right)-\left[H\left(S_{n}\right)-\log\frac{s_{n}}{l}\right] +O\left(\frac{1}{s_{n}}\right),
	\end{equation*}
	where $s_{n}:=\sqrt{\sum_{k=1}^{n}\mathrm{Var}\left(\tilde{\theta}_{k}\right)}$ and for any $n\ge 1$, the $O\left(1/s_{n}\right)$ can be absolutely bounded by $l/s_{n}+l^{2}/2s_{n}^{2}$.
	Moreover, let $U$ be an independent uniform random variable on $(-1/2,1/2)$. Then as $n\to \infty$,
	\begin{equation*}
		D_{KL}\left(S_{n}'\right)=D_{KL}\left(S_{n}'+\frac{l}{s_{n}}U\right)+O\left(\frac{1}{s_{n}}\right),
	\end{equation*}
	where the error term $O\left(1/s_{n}\right)$ is absolutely bounded by $l/s_{n}+13l^{2}/\left(24s_{n}^{2}\right)$.
\end{lemma}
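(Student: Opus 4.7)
The plan is to expand both relative entropies from their definitions, approximate each quantized Gaussian mass by a density times the lattice width, and account for the residual difference via the variance inflation caused by uniform dithering. Concretely, for the first identity I would start from $D_{KL}(S_n') = -H(S_n') - \sum_k p_k \log q_k$, where $p_k = \mathbb{P}(S_n' = y_k)$ and $q_k = \int_{y_k}^{y_k + l/s_n}\phi(y)\,dy$ with $\phi$ the density of $\mathcal{N}(0,1)$; the lattice $\{y_k\}$ has spacing $l/s_n$ because each $\hat{\omega}_i$ takes values in $\Xi$ and $\kappa_i = 0$. Since $S_n \mapsto S_n/s_n$ merely relabels atoms, $H(S_n') = H(S_n)$. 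The mean value theorem gives $q_k = (l/s_n)\phi(y_k^{*})$ for some $y_k^{*} \in [y_k, y_k + l/s_n]$, so (3) yields $-\log q_k = \log(s_n/l) + \frac{1}{2}\log(2\pi) + \frac{1}{2}(y_k^{*})^2$. Summing against $p_k$ and using $\sum_k p_k y_k^2 = \mathrm{Var}(S_n') = 1$ reduces the first claim to estimating $\sum_k p_k((y_k^{*})^2 - y_k^2)$. The pointwise bound $|(y_k^{*})^2 - y_k^2| \le 2|y_k|(l/s_n) + (l/s_n)^2$ combined with $\mathbb{E}|S_n'| \le 1$ (Cauchy--Schwarz) gives $|\sum_k p_k ((y_k^{*})^2 - y_k^2)| \le 2 l/s_n + l^2/s_n^2$, and multiplying by $1/2$ produces the desired bound $l/s_n + l^2/(2 s_n^2)$.

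For the second identity I would compute the differential entropy of $S_n' + (l/s_n)U$ directly. Its density is a step function of height $(s_n/l)p_k$ on the interval of length $l/s_n$ centred at $y_k$, whence $h(S_n' + (l/s_n)U) = -\sum_k p_k \log((s_n/l)p_k) = H(S_n) - \log(s_n/l)$. Independence of $U$ together with $\mathrm{Var}(U) = 1/12$ yields $\mathrm{Var}(S_n' + (l/s_n)U) = 1 + l^2/(12 s_n^2)$, so by (2), $D_{KL}(S_n' + (l/s_n)U) = \frac{1}{2}\log(2\pi e) + \frac{1}{2}\log(1 + l^2/(12 s_n^2)) - H(S_n) + \log(s_n/l)$. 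Subtracting this from the first identity cancels the leading terms and leaves an error equal to the first identity's error minus $\frac{1}{2}\log(1 + l^2/(12 s_n^2))$. Bounding the latter by $l^2/(24 s_n^2)$ via $\log(1+x) \le x$ and combining with $l/s_n + l^2/(2 s_n^2)$ yields the stated $l/s_n + 13 l^2/(24 s_n^2)$.

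The principal difficulty is technical rather than conceptual: one must track constants with the stated precision. The $l/s_n$ contribution hinges on $\mathbb{E}|S_n'| \le 1$, which follows from Cauchy--Schwarz and the unit-variance normalisation; the constants $1/2$ and $13/24$ emerge only after carefully combining the midpoint quadrature error in the Gaussian approximation of $q_k$ with the dithering-induced variance inflation $l^2/(12 s_n^2)$. Since no further distributional hypothesis on $S_n'$ is used beyond mean zero and unit standardised variance, the two identities hold at the level of generality of the lemma and serve as the base case for the multivariate Theorem 3.1.
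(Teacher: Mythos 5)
Your proposal is correct and follows essentially the same route as the paper: the mean value theorem applied to the quantized Gaussian mass, the identity $\sum_k p_k y_k^2 = 1$ together with Cauchy--Schwarz ($\mathbb{E}|S_n'|\le 1$) to obtain the $l/s_n + l^2/(2s_n^2)$ bound, and the step-density computation $h(S_n' + (l/s_n)U) = H(S_n) - \log(s_n/l)$ combined with the variance inflation $l^2/(12 s_n^2)$ for the second identity. The constants match the paper's exactly, so no further comment is needed.
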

\begin{proof}
	First, the sums $S_{n}$ and $S_{n}'$ take values in $\left\{na+kl:k \in \mathbb{Z}\right\}$ and $\left\{(na+kl)/s_{n}:k \in \mathbb{Z}\right\}$
	respectively. By denoting $p,p_{S_{n}'}$ and $q$ as the probability mass functions of $S_{n}$, $S_{n}'$ and the quantised Gaussian in the definition of $D_{KL}\left(S_{n}'\right)$, we obtain that for any $k\in \mathbb{Z}$ and some $\nu_{k}\in \left[k,k+1\right]$,
	\begin{equation}
		q\left(\frac{na+kl}{s_{n}}\right)=\int_{\frac{na+kl}{s_{n}}}^{\frac{na+\left(k+1\right)l}{s_{n}}}\phi\left(x\right)dx=\frac{l}{s_{n}}\cdot\phi\left(\frac{na+\nu_{k}l}{s_{n}}\right).\nonumber
	\end{equation}
	Therefore, on the one hand, 
	\begin{align}
		D_{KL}\left(S_{n}'\right)&=\sum_{k\in \mathbb{Z}}p\left(na+kl\right)\cdot\log\left[\frac{p\left(na+kl\right)}{\frac{l}{s_{n}}\phi\left(\frac{na+\nu_{k}l}{s_{n}}\right)}\right],
	\end{align}
	while on the other hand, we have
	\begin{align}
		&\frac{1}{2}\log\left(2\pi \mathrm{e}\right)+\log\left(\frac{s_{n}}{l}\right)\nonumber\\	=&\frac{1}{2s_{n}^{2}}\sum_{k\in\mathbb{Z}}p_{S_{n}'}\left(\frac{na+kl}{s_{n}}\right)\cdot\left(na+kl\right)^{2}+\frac{\log\left(2\pi\right)}{2}\cdot\sum_{k\in\mathbb{Z}}p\left(na+kl\right)-\log\left(\frac{l}{s_{n}}\right)\nonumber\\
		=&\frac{1}{2s_{n}^{2}}\sum_{k\in\mathbb{Z}}p\left(na+kl\right)\cdot\left(na+kl\right)^{2}+\frac{\log\left(2\pi\right)}{2}\cdot\sum_{k\in\mathbb{Z}}p\left(na+kl\right)-\log\left(\frac{l}{s_{n}}\right)\nonumber\\
		=&-\log\left(\frac{l}{s_{n}}\right)+\frac{1}{2}\sum_{k\in \mathbb{Z}}p\left(na+kl\right)\left[\frac{\left(na+kl\right)^{2}}{s_{n}^{2}}+\log\left(2\pi\right)\right]\nonumber\\
		=&-\sum_{k\in \mathbb{Z}}p\left(na+kl\right)\cdot\log\left[\frac{l}{s_{n}}\cdot\phi\left(\frac{na+kl}{s_{n}}\right)\right].
	\end{align}
	Combining (4.1)-(4.2), we obtain that
	\begin{align}
		&\left|D_{KL}\left(S_{n}'\right)+H\left(S_{n}\right)-  \frac{1}{2}\log\left(2\pi \mathrm{e}\right)-\log\left(\frac{s_{n}}{l}\right)\right|\nonumber\\
		=&\left|\sum_{k\in \mathbb{Z}}p\left(na+kl\right)\cdot\log\left[\frac{p\left(na+kl\right)}{\frac{l}{s_{n}}\cdot\phi\left(\frac{na+\nu_{k}l}{s_{n}}\right)}\right]+\sum_{k\in \mathbb{Z}}p\left(na+kl\right)\cdot\log\left[\frac{\frac{l}{s_{n}}\cdot\phi\left(\frac{na+kl}{s_{n}}\right)}{p\left(na+kl\right)}\right]\right|\nonumber\\
		=&\left|\sum_{k\in \mathbb{Z}}p\left(na+kl\right)\cdot\log\left(\frac{\phi\left(\frac{na+kl}{s_{n}}\right)}{\phi\left(\frac{na+\nu_{k}l}{s_{n}}\right)}\right)\right|\nonumber\\
		\le &\frac{1}{2s_{n}^{2}}\sum_{k\in \mathbb{Z}}p\left(na+kl\right)\cdot\left|\left(na+kl\right)^{2}-\left(na+\nu_{k}l\right)^{2}\right|\nonumber\\
		\le& \frac{l}{s_{n}^{2}}\cdot\left[\sum_{k\in \mathbb{Z}}p\left(na+kl\right)\right]^{1/2}\cdot\left[\sum_{k\in \mathbb{Z}}p\left(na+kl\right)\cdot\left|na+kl\right|^{2}\right]^{1/2}+\frac{l^{2}}{2s_{n}^{2}}\nonumber\\
		=&\frac{l}{s_{n}^{2}}\cdot\left(\mathbb{E}\left(S_{n}\right)^{2}\right)^{1/2}+\frac{l^{2}}{2s_{n}^{2}}\nonumber\\
		=&\frac{l}{s_{n}}+\frac{l^{2}}{2s_{n}^{2}},
	\end{align}
	which leads to 
	\begin{equation}
		H\left(S_{n}\right)-\log\left(\frac{s_{n}}{l}\right)\le \frac{1}{2}\log\left(2\pi\mathrm{e}\right)+\frac{l}{s_{n}}\left(1+\frac{l}{2s_{n}}\right),
	\end{equation}
	as $D_{KL}\left(S_{n}\right)\ge 0$. More precisely, with the definition of $U$ above, we have
	\begin{eqnarray*}
		H\left(S_{n}\right)-\log\left(\frac{s_{n}}{l}\right)&=	H\left(S_{n}'\right)-\log\left(\frac{s_{n}}{l}\right)\le \frac{1}{2}\log\left[2\pi\mathrm{e}\left(1+\frac{l^{2}}{12s_{n}}\right)\right].
	\end{eqnarray*}
	Therefore, it can be estimated from (4.3)-(4.4) that
	\begin{align*}
		&\left|D_{KL}\left(S_{n}'\right)-D_{KL}\left(S_{n}'+\frac{l}{s_{n}}U\right)\right|\\
		\le & \left|D_{KL}\left(S_{n}'+\frac{l}{s_{n}}U\right)-\frac{1}{2}\log\left(2\pi\mathrm{e}\right)+H\left(S_{n}\right)-\log\left(\frac{s_{n}}{l}\right)\right|+\frac{l}{s_{n}}+\frac{l^{2}}{2s_{n}^{2}}\\
		= & \left|\frac{1}{2}\log\left(2\pi\mathrm{e}\left(1+\frac{l^{2}}{12s_{n}^{2}}\right)\right)-l\left(S_{n}'+\frac{l}{s_{n}}U\right)\right.\\
		&\qquad\qquad\qquad\left.-\frac{1}{2}\log\left(2\pi\mathrm{e}\right)+\left[H\left(S_{n}\right)-\log\left(\frac{s_{n}}{l}\right)\right]\right|+\frac{l}{s_{n}}+\frac{l^{2}}{2s_{n}^{2}}\\
		=&\left|\frac{1}{2}\log\left(1+\frac{l^{2}}{12s_{n}^{2}}\right)\right|+\frac{l}{s_{n}}+\frac{l^{2}}{2s_{n}^{2}}\\
		\le& \frac{l}{s_{n}}+\frac{13l^{2}}{24s_{n}^{2}}.	
	\end{align*}
	Our proof is complete now.
\end{proof}

\begin{remark}
	It is important to note that for discrete random variables, convergence of the relative entropy to 0 is not equivalent to convergence of the Shannon entropy. 
	Lemma 4.1 explicitly establishes the relationship between these two entropies of sequence $\left\{\tilde{\theta}_{n}\right\}_{n\in\mathbb{N}^{+}}$, which crucially depends on the step size $l$. 
\end{remark}

To deal with the lattice values of $\left\{\tilde{\theta}_{n}\right\}_{n\in\mathbb{N}^{+}}$, we start with the case when it is a sequence of independent random variables with the same Bernoulli distribution $\mathrm{Bern}\left(1/2\right)$ 
taking values in (a subset of) $\Xi$ with $l=1$, as it represents a simplest symmetric discrete distribution. We thus immediately arrive at the following conclusion.
\begin{proposition}\cite{MR4687403}
	For all $n\ge 2$,
	\begin{equation*}
		\left|\left[H\left(S_{n}\right)-\log\sqrt{n}\right]-\frac{1}{2}\log\left(\frac{1}{2}\pi\mathrm{e}\right)\right|\le \frac{4}{\sqrt{n}},
	\end{equation*}
	while $D_{KL}\left(S_{n}'\right)\le 8/\sqrt{n}$.
\end{proposition}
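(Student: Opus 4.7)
The plan is to exploit the fact that when the $\hat{\omega}_i$'s are i.i.d.\ $\mathrm{Bern}(1/2)$ on the lattice of spacing $l=1$, after centering we may take the values to be $\pm 1/2$, so each $\zeta_i^2 = 1/4$ and $s_n = \sqrt{n}/2$; moreover $S_n$ is a shifted Binomial, with $P(S_n = na + k - n/2) = \binom{n}{k}2^{-n}$ for $k = 0, 1, \ldots, n$. The key observation is that Lemma 3.1 already ties the two bounds together: substituting $s_n/l = \sqrt{n}/2$ into its conclusion gives
\[
D_{KL}(S_n') = \frac{1}{2}\log\!\left(\frac{\pi\mathrm{e}}{2}\right) - \bigl[H(S_n) - \log\sqrt{n}\bigr] + O(1/\sqrt{n}),
\]
with the $O(1/\sqrt{n})$ term bounded explicitly by $2/\sqrt{n} + 2/n$. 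Hence it suffices to bound either one of $D_{KL}(S_n')$ or $H(S_n) - \log\sqrt{n}$ directly, and the other will follow automatically.

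To produce such a direct estimate I would apply a sharp local central limit theorem for the Binomial distribution, i.e., compare $\binom{n}{k}2^{-n}$ pointwise with the quantised Gaussian mass of $\mathcal{N}(n/2, n/4)$ on the integer lattice. Using Stirling's formula with its explicit Robbins' remainder $n! = \sqrt{2\pi n}(n/\mathrm{e})^n \exp\bigl(\theta_n/(12n)\bigr)$, the ratio of binomial mass to Gaussian mass can be written as $1 + O(1/\sqrt{n})$ uniformly in the bulk region $|k - n/2| \le C\sqrt{n\log n}$, with constants that can be read off from the expansion. Plugging into the definition of $D_{KL}(S_n')$, the $\log$ of this ratio contributes $O(1/\sqrt{n})$ in the bulk; outside the bulk, Hoeffding's inequality guarantees that the probability mass (and hence the relative-entropy contribution, after a Cauchy--Schwarz-type truncation argument controlling $\log(p/q)$ on the tails) is superpolynomially small in $n$.

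Collecting the bulk and tail contributions then yields $D_{KL}(S_n') \le 8/\sqrt{n}$, and substituting back into the Lemma~3.1 identity above converts this into the entropy statement $|[H(S_n) - \log\sqrt{n}] - \frac{1}{2}\log(\pi\mathrm{e}/2)| \le 4/\sqrt{n}$. The main obstacle in this scheme is \emph{not} the qualitative rate $O(1/\sqrt{n})$, which follows in a standard way, but rather the quantitative task of carrying the Stirling remainder through all three sums (mass, entropy, relative entropy) with sharp enough constants to land on the explicit prefactors $4$ and $8$. Since the Bernoulli$(1/2)$ setting is exactly the i.i.d.\ benchmark treated in \cite{MR4687403}, I would conclude by simply invoking the one-dimensional result established there, which carries out precisely the above program with the stated constants.
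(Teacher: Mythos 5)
The paper gives no argument for this proposition at all: it is stated as a direct import from \cite{MR4687403}, so your closing move of ``simply invoking the one-dimensional result established there'' is exactly what the paper does, and your intermediate sketch (shifted Binomial, local CLT via Stirling with the Robbins remainder, bulk/tail splitting, and Lemma~3.1 to pass between the two quantities) is a fair reconstruction of how that reference actually proceeds.

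One concrete flaw in your sketch, though, is the direction in which you propose to use the Lemma~3.1 identity. You correctly compute that with $s_{n}/l=\sqrt{n}/2$ the identity reads
\begin{equation*}
D_{KL}\left(S_{n}'\right)=\frac{1}{2}\log\left(\frac{\pi\mathrm{e}}{2}\right)-\left[H\left(S_{n}\right)-\log\sqrt{n}\right]+O\left(\frac{1}{\sqrt{n}}\right),
\end{equation*}
with the error term bounded by $2/\sqrt{n}+2/n$. But you then claim to first establish $D_{KL}\left(S_{n}'\right)\le 8/\sqrt{n}$ and ``substitute back'' to get the entropy bound with constant $4$. That substitution only yields $\left|\left[H\left(S_{n}\right)-\log\sqrt{n}\right]-\frac{1}{2}\log\left(\frac{\pi\mathrm{e}}{2}\right)\right|\le 8/\sqrt{n}+2/\sqrt{n}+2/n$, i.e.\ roughly $10/\sqrt{n}$, not $4/\sqrt{n}$. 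To land on the stated constants the estimate must go the other way: bound $\left|H\left(S_{n}\right)-\log\sqrt{n}-\frac{1}{2}\log\left(\frac{\pi\mathrm{e}}{2}\right)\right|$ directly by $4/\sqrt{n}$ via the Stirling expansion of the binomial entropy, and only then deduce $D_{KL}\left(S_{n}'\right)\le 4/\sqrt{n}+2/\sqrt{n}+2/n\le 8/\sqrt{n}$ from the identity (also using $D_{KL}\ge 0$ for the lower side). Since you ultimately defer to the cited reference, which does carry out the computation in the correct order, this does not invalidate your conclusion, but as written your derivation of the constant $4$ does not go through.
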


In fact, if we relax the condition on the identical distribution of $\left\{\tilde{\theta}_{n}\right\}_{n\in\mathbb{N}^{+}}$, that is, it is a sequence of independent random variables with the $i$-th variable distributed as $\mathrm{Bern}\left(p_{i}\right)$ for some $p_{i}\in [0,1]$, we therefore obtain the following result.

\begin{proposition}
	Suppose that $\left\{p_{i}\right\}_{i=1}^{n}$ satisfies that $\lim_{n \to \infty}\sum_{i=1}^{n}p_{i}\left(1-p_{i}\right)=\infty$,
	while for some $\beta\in (0,1/2)$,
	\begin{equation}
		\lim_{n\to \infty}\frac{
			\log n\cdot\sum_{i=1}^{n}p_{i}}{\left(\sum_{i=1}^{n}p_{i}\left(1-p_{i}\right)\right)^{2\beta+1}}=0.\end{equation}
	Then for all $n\ge 2$, $\left|H\left(S_{n}\right)-\log\left(2\pi\mathrm{e}\left(\sum_{i=1}^{n}p_{i}(1-p_{i})\right)\right)/2\right|$ is bounded by
	\begin{equation*}
		\min\left\{\frac{1}{24\sqrt{\sum_{i=1}^{n}p_{i}\left(1-p_{i}\right)}},2\mathrm{exp}\left\{-r\left(n\right)+\log n+\log r \left(n\right)\right\}\right\},	
	\end{equation*}
	where $r(n)$ is defined in (4.10).
	Furthermore, $D_{KL}\left(S_{n}'\right)$ has a upper bound expressed as
	\begin{align*}
		&\frac{1}{\sqrt{\sum_{i=1}^{n}p_{i}\left(1-p_{i}\right)}}+\frac{1}{2\sum_{i=1}^{n}p_{i}\left(1-p_{i}\right)}\\
		&\qquad\qquad
		+\min\left\{\frac{1}{24\sqrt{\sum_{i=1}^{n}p_{i}\left(1-p_{i}\right)}},2\mathrm{exp}\left\{-r\left(n\right)+\log n+\log r \left(n\right)\right\}\right\}.
	\end{align*}
\end{proposition}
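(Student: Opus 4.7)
The plan is to reduce the heterogeneous Bernoulli sum to the symmetric $\mathrm{Bern}(1/2)$ setting of Proposition 3.1 via the \emph{Bernoulli decomposition trick}: for each $i$, write
\[
\hat{\omega}_i = B_i Z_i + (1-B_i)\, c_i,
\]
where $Z_i \sim \mathrm{Bern}(1/2)$, the indicator $B_i \sim \mathrm{Bern}(\tilde{p}_i)$ has $\tilde{p}_i := 2\min(p_i, 1-p_i)$, and $c_i \in \{0,1\}$ is the rounding constant ($0$ if $p_i \le 1/2$, $1$ otherwise), all independent. Setting $N_n := \sum_i B_i$, one then has $S_n = T_{N_n} + C(B_{1:n})$ with $T_m \sim \mathrm{Bin}(m, 1/2)$ and $C(\cdot)$ a deterministic shift depending only on the binary string $B_{1:n}$; in particular, conditioned on $\{N_n = m\}$ the law of $S_n$ is a lattice translate of $\mathrm{Bin}(m, 1/2)$.

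Conditional on $B_{1:n}$, Proposition 3.1 applies verbatim and gives
\[
\Bigl| H(S_n \mid B_{1:n}) - \log\sqrt{N_n} - \tfrac{1}{2}\log(\pi \mathrm{e}/2) \Bigr| \le \frac{4}{\sqrt{N_n}}.
\]
After averaging, using the identity $\tfrac{1}{2}\log(2\pi \mathrm{e} V_n) = \log\sqrt{V_n} + \tfrac{1}{2}\log(\pi \mathrm{e}/2) + \tfrac{1}{2}\log 2$ with $V_n := \sum p_i(1-p_i)$, and Taylor-expanding $\log\sqrt{N_n}$ about $\mathbb{E} N_n = \sum_i \tilde{p}_i \ge 2 V_n$ (since $\tilde{p}_i \ge 2p_i(1-p_i)$), a direct bookkeeping of constants yields the polynomial rate $1/(24\sqrt{V_n})$. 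The transition from $\mathbb{E}[H(S_n \mid B_{1:n})]$ to the unconditional $H(S_n)$ is controlled by writing the PMF of $S_n$ as a finite mixture of shifted binomials and invoking the smoothness and log-concavity of the binomial family already exploited in Lemma 3.1.

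To obtain the exponential branch of the $\min$, I fix the threshold $r(n)$ specified in (14) and split on the event $\{N_n \ge r(n)\}$. On the typical event, the polynomial bound above applies; on the atypical event $\{N_n < r(n)\}$, $H(S_n)$ is trivially bounded by $\log(n+1)$, and a Chernoff estimate on $N_n$ sustained by condition (9) yields $\mathbb{P}(N_n < r(n)) \le \exp\{-r(n) + \log r(n)\}$, whose product with $\log(n+1)$ produces the claimed $2\exp\{-r(n) + \log n + \log r(n)\}$ contribution. Taking the minimum of the two estimates gives the stated bound on $|H(S_n) - \tfrac{1}{2}\log(2\pi \mathrm{e} V_n)|$. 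The corresponding bound on $D_{KL}(S_n')$ follows at once from Lemma 3.1 applied to the centered sequence $\hat{\omega}_i - p_i$ (which leaves $H(S_n)$ invariant): with $l = 1$ and $s_n = \sqrt{V_n}$, the Lemma-3.1 error contributes exactly the additional $1/\sqrt{V_n} + 1/(2V_n)$ terms that appear in the stated upper bound on $D_{KL}(S_n')$.

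The main obstacle is the passage from conditional to unconditional entropy. A priori $B_{1:n}$ carries $\sum_i H(\tilde{p}_i) = O(n)$ bits of information, yet $S_n$ depends on $B_{1:n}$ only through $N_n$ and a one-dimensional shift, so the genuinely relevant mutual information should be $O(\log n)$. Making this intuition rigorous, and simultaneously choosing the threshold $r(n)$ large enough for condition (9) to absorb the $\log n$ factor in the exponent while keeping the Chernoff bound nontrivial, is where hypothesis (9) is used in its sharpest form; the remainder is careful bookkeeping of the constants produced by Proposition 3.1 and Lemma 3.1.
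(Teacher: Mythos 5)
Your route is genuinely different from the paper's. The paper does not use a Bernoulli-part decomposition in Proposition 3.2 at all: it obtains the upper bound on $H\left(S_{n}\right)$ directly from Lemma 3.1 (nonnegativity of the relative entropy, inequality (10)), and the lower bound via a local limit theorem for $\mathbb{P}\left(S_{n}=k\right)$ on the central window $\left|k-\sum_{i=1}^{n}p_{i}\right|\le s_{n}^{2\alpha+1}$, combined with an exponential-moment (Chernoff) bound on the tail probabilities, which is precisely where $r\left(n\right)$ and hypothesis (9) enter. The Bernoulli decomposition is reserved for Proposition 3.3 and Theorem 3.1. Your conditioning strategy is a legitimate alternative in spirit, but as written it has a genuine gap in the lower bound.

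The gap is quantitative, not just a matter of bookkeeping. Conditioned on $B_{1:n}$, the law of $S_{n}$ is a deterministic shift of $\mathrm{Bin}\left(N_{n},1/2\right)$, whose variance is $N_{n}/4$; Proposition 3.1 plus concentration of $N_{n}$ therefore yields at best $H\left(S_{n}\right)\ge H\left(S_{n}\mid B_{1:n}\right)\approx \frac{1}{2}\log\left(2\pi\mathrm{e}\,\mathbb{E}N_{n}/4\right)$ with $\mathbb{E}N_{n}/4=\frac{1}{2}\sum_{i}\min\left(p_{i},1-p_{i}\right)$. This is not $V_{n}=\sum_{i}p_{i}\left(1-p_{i}\right)$: for $p_{i}\equiv 1/4$ one gets $n/8$ versus $3n/16$, a deficit of $\frac{1}{2}\log\left(3/2\right)$ that cannot be absorbed into a $1/\left(24\sqrt{V_{n}}\right)$ error. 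The missing constant is exactly $I\left(S_{n};B_{1:n}\right)=H\left(S_{n}\right)-H\left(S_{n}\mid B_{1:n}\right)$, i.e.\ the entropy carried by the random shift $C\left(B_{1:n}\right)+N_{n}/2$, which accounts for the variance $V_{n}-\frac{1}{2}\sum_{i}\min\left(p_{i},1-p_{i}\right)$. You flag this as ``the main obstacle'' but do not close it; doing so rigorously would require a local limit theorem for the mixture of shifted binomials, which is essentially the argument the paper runs directly on $\mathbb{P}\left(S_{n}=k\right)$. A secondary issue: you repurpose $r\left(n\right)$, defined in (14) as an exponential rate in the pointwise tail bound for $\mathbb{P}\left(S_{n}=k\right)$, as a threshold for $N_{n}$ in a Chernoff estimate, and it is not clear this reproduces the stated quantity $2\exp\left\{-r\left(n\right)+\log n+\log r\left(n\right)\right\}$. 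The easy upper bound on $H\left(S_{n}\right)$ and the passage to the $D_{KL}\left(S_{n}'\right)$ bound via Lemma 3.1 are fine.
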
 
\begin{proof}
	According to Lemma 4.1, we have
	\begin{equation}
		H\left(S_{n}\right)-\log\left(s_{n}\right)\le \frac{1}{2}\log\left[2\pi\mathrm{e}\left(1+\frac{1}{12s_{n}}\right)\right]
		\le \frac{1}{2}\log\left(2\pi\mathrm{e}\right)+\frac{1}{24s_{n}},
	\end{equation}
	where at this point, it can be immediately calculated that  $s_{n}^{2}=\sum_{i=1}^{n}p_{i}\left(1-p_{i}\right)$. 
	Due to the loss of identical distribution, for the estimation of the lower bound of $H\left(S_{n}\right)$, we are unable to follow the same idea as in \cite{MR4687403}. However, since for $i=1,2,\cdots,n$, we have $   \left|\tilde{\theta}_{i}-p_{i}\right|\le \max\left(p_{i},1-p_{i}\right)\le 1$,
	then for any $\delta>0$, there exists some $N\ge 1$ satisfying that when $n\ge N$, $\delta\left(\sum_{i=1}^{n}p_{i}(1-p_{i})\right)^{1/2}>1$, so that for $n\ge N$, $\sum_{i=1}^{n}\mathbb{E}\left[\left(\tilde{\theta}_{i}-p_{i}\right)^{2}\mid\mathbb{I}_{\left|\tilde{\theta}_{i}-p_{i}\right|>\delta s_{n}}\right]/s_{n}^{2}=0$,
	that is, \begin{equation*}
		\lim_{n \to \infty}\frac{1}{s_{n}^{2}}\cdot\sum_{i=1}^{n}\mathbb{E}\left[\left(\tilde{\theta}_{i}-p_{i}\right)^{2}\mid\mathbb{I}_{\left|\tilde{\theta}_{i}-p_{i}\right|>\delta s_{n}}\right]=0.
	\end{equation*}
	Thus, we may introduce some function $c(n)$ satisfying $\lim_{n \to \infty}c\left(n\right)=\infty$, so that for any $k$ belonging to interval
	\begin{equation}
		\mathbb{Z}\cap\left[\sum_{i=1}^{n}p_{i}-c\left(n\right)s_{n},\sum_{i=1}^{n}p_{i}+c\left(n\right)s_{n}\right],
	\end{equation}  
	it can be estimated that
	\begin{equation}
		\mathbb{P}\left(S_{n}=k\right)=\frac{1}{\sqrt{2\pi s_{n}^{2}}}\mathrm{exp}\left\{-\left(k-\sum_{i=1}^{n}p_{i}\right)^{2}/2s_{n}^{2}\right\}
		\left(1+o\left(1\right)\right).
	\end{equation}
	Here, aiming at the uniformly bounded sequence $\left\{\tilde{\theta}_{n}\right\}_{n\in\mathbb{N}^{+}}$, the scaling function $c(n)$ can be specifically picked as $s_{n}^{2\alpha}$ for $\alpha\in (0,1/2)$ in the following.   It will exactly determine the width of the central region (4.7).
	Meanwhile, when $k\in \mathbb{Z}\cap\left(\left[\sum_{i=1}^{n}p_{i}-s_{n}^{2\alpha+1},\sum_{i=1}^{n}p_{i}+s_{n}^{2\alpha+1}\right]\right)^{\mathrm{c}}  $, i.e., $k$ satisfies that $\left|k-\sum_{i=1}^{n}p_{i}\right|>s_{n}^{2\alpha+1}$, 
	then for some $u\in \mathbb{R}$,
	\begin{align}
		\mathbb{P}\left(S_{n}=k\right)&=2\mathbb{P}\left(\sum_{i=1}^{n}\tilde{\theta}_{i}>\sum_{i=1}^{n}p_{i}+s_{n}^{2\alpha+1}\right)
		\le\frac{2\mathbb{E}\left(\mathrm{exp}\left\{u\sum_{i=1}^{n}\tilde{\theta}_{i}\right\}\right)}{\mathrm{exp}\left\{u\left(\sum_{i=1}^{n}p_{i}+s_{n}^{2\alpha+1}\right)\right\}}
	:=2\mathrm{e}^{g\left(u\right)},
	\end{align}
	where $g\left(u\right):=-u\left(\sum_{i=1}^{n}p_{i}+s_{n}^{2\alpha+1}\right)+\sum_{i=1}^{n}p_{i}\left(\mathrm{e}^{u}-1\right)$.
	Thus, it can be calculated that the solution of $g'\left(u\right)=0$
	corresponds to $u^{\ast}=\ln\left(\left(1+s_{n}^{2\alpha+1}\right)/\sum_{i=1}^{n}p_{i}\right)$,
	and, $u^{\ast}$ satisfies that $g''\left(u^{\ast}\right)>0$. 
	Since $\alpha<1/2$, we obtain 
	\begin{equation*}
		\frac{s_{n}^{2\alpha+1}}{\sum_{i=1}^{n}p_{i}}\le\left(\sum_{i=1}^{n}p_{i}\right)^{\alpha-\frac{1}{2}}\to0,\  \ \mathrm{as}\ n \to \infty,
	\end{equation*}
	so that $u^{\ast}=s_{n}^{2\alpha+1}/\sum_{i=1}^{n}p_{i}+O\left(s_{n}^{4\alpha+2}/\left(\sum_{i=1}^{n}p_{i}\right)^{2}\right)$,
	which leads to
	\begin{align*}
		g\left(u^{\ast}\right)&=-\ln\left(1+\frac{s_{n}^{2\alpha+1}}{\sum_{i=1}^{n}p_{i}}\right)\cdot\left(\sum_{i=1}^{n}p_{i}+s_{n}^{2\alpha+1}\right)+\sum_{i=1}^{n}p_{i}\left(\mathrm{e}^{u^{\ast}}-1\right)\\
		&=s_{n}^{2\alpha+1}-\left(\frac{s_{n}^{2\alpha+1}}{\sum_{i=1}^{n}p_{i}}-\frac{s_{n}^{4\alpha+2}}{\left(\sum_{i=1}^{n}p_{i}\right)^{2}}\right)\cdot\left(\sum_{i=1}^{n}p_{i}+s_{n}^{2\alpha+1}\right)\\
		&=\frac{s_{n}^{6\alpha+3}}{2\left(\sum_{i=1}^{n}p_{i}\right)^{2}}-\frac{s_{n}^{4\alpha+2}}{2\sum_{i=1}^{n}p_{i}}\\
		&=\frac{s_{n}^{4\alpha+2}}{2\sum_{i=1}^{n}p_{i}}\cdot\frac{s_{n}^{2\alpha+1}-\sum_{i=1}^{n}p_{i}}{\sum_{i=1}^{n}p_{i}}\\
		&=O\left(-\frac{s_{n}^{4\alpha+2}}{2\sum_{i=1}^{n}p_{i}}\right).
	\end{align*}
	Back to (4.9), for $k$ satisfying $\left|k-\sum_{i=1}^{n}p_{i}\right|>s_{n}^{2\alpha+1}$,
	we obtain that
	\begin{equation*}
		\mathbb{P}\left(S_{n}=k\right)\le 2\cdot\inf_{u}\mathrm{exp}\left\{g\left(u\right)\right\} = 2\mathrm{exp}\left\{-\frac{s_{n}^{4\alpha+2}}{2\sum_{i=1}^{n}p^{2}_{i}+2s_{n}^{2}}\right\}.	
	\end{equation*}
	If $\sum_{i=1}^{n}p_{i}^{2}=o\left(s_{n}^{2}\right)$, then it turns out that
	\begin{equation*}
		\mathbb{P}\left(S_{n}= k\right)= 
		2\mathrm{exp}\left\{-s_{n}^{4\alpha}\right\}\to 0,\ \ \mathrm{as}\ n \to \infty.
	\end{equation*} 
	If $\sum_{i=1}^{n}p_{i}^{2}$ has the same order with $s_{n}^{2}$, then it can be similarly deduced that
	\begin{equation*}
		\mathbb{P}\left(S_{n}= k\right)=O\left(\mathrm{exp}\left\{-s_{n}^{4\alpha}\right\}\right)\to 0,\ \ \mathrm{as}\ n \to \infty.
	\end{equation*}
	In particular, 
	if we define
	\begin{equation}
		r\left(n\right):=2\mathrm{exp}\left\{s_{n}^{2\alpha+1}/2\sum_{i=1}^{n}p_{i}\right\},
	\end{equation}
	then from (4.5), it can be further estimated as $\lim_{n \to \infty}r\left(n\right)/\log n=\infty$, and, as $n \to \infty$,
	\begin{align*}
		\sum_{|k-\sum_{i=1}^{n}p_{i}|>s_{n}^{2\alpha+1}} \mathrm{e}^{-r\left(n\right)}\cdot\left(r\left(n\right)-\log2\right)&\le 2n\mathrm{e}^{-r\left(n\right)}\cdot\left(r\left(n\right)-\log 2\right)\\
		&	\le 2\mathrm{exp}\left\{-r\left(n\right)+\log n+\log r\left(n\right)\right\}\\
		&		\to0,
	\end{align*}
	which will not affect our subsequent estimations exactly. Therefore, it can be calculated from (4.8) that
	\begin{align}
		H\left(S_{n}\right)&=-\sum_{|k-\sum_{i=1}^{n}p_{i}|\le s_{n}^{2\alpha+1}}P\left(S_{n}=k\right)\cdot\log\mathbb{P}\left(S_{n}=k\right)\nonumber\\
		&\qquad\qquad-\sum_{|k-\sum_{i=1}^{n}p_{i}|>s_{n}^{2\alpha+1}}P\left(S_{n}=k\right)\cdot\log\mathbb{P}\left(S_{n}=k\right)\nonumber\\
		&\le \frac{1}{2}\log\left(2\pi\mathrm{e}s_{n}^{2}\right)+o\left(1\right)+2\mathrm{exp}\left\{-l\left(n\right)+\log n+\log \left(n\right)\right\}.	
	\end{align}
	Meanwhile, as long as $s_{n}^{2\alpha+1}$ (as well as $n$) is large enough, we are led to
	\begin{align}
		H\left(S_{n}\right)&=-\sum_{|k-\sum_{i=1}^{n}p_{i}|\le s_{n}^{2\alpha+1}}P\left(S_{n}=k\right)\cdot\log\mathbb{P}\left(S_{n}=k\right)\nonumber\\
		&\qquad\qquad-\sum_{|k-\sum_{i=1}^{n}p_{i}|>s_{n}^{2\alpha+1}}P\left(S_{n}=k\right)\cdot\log\mathbb{P}\left(S_{n}=k\right)\nonumber\\
		&\ge -\sum_{|k-\sum_{i=1}^{n}p_{i}|\le s_{n}^{2\alpha+1}}\frac{1}{\sqrt{2\pi s_{n}^{2}}}\cdot\mathrm{exp}\left\{-\left(k-\sum_{i=1}^{n}p_{i}\right)^{2}/2s_{n}^{2}\right\}\nonumber
		\\
		&\qquad\qquad\qquad\times\log\left(\frac{1}{\sqrt{2\pi s_{n}^{2}}}\cdot\mathrm{exp}\left\{-\left(k-\sum_{i=1}^{n}p_{i}\right)^{2}/2s_{n}^{2}\right\}
		\right)\nonumber\\
		&=\sum_{t=-[s_{n}^{2\alpha+1}]}^{[s_{n}^{2\alpha+1}]}\frac{1}{\sqrt{2\pi s_{n}^{2}}}\mathrm{e}^{-t^{2}/2s_{n}^{2}}\cdot\left[\frac{1}{2}\log\left(2\pi s_{n}^{2}\right)+\frac{t^{2}}{2s_{n}^{2}}\right]\nonumber\\
		&=\int_{-\infty}^{+\infty}\frac{\mathrm{e}^{-x^{2}}}{\sqrt{2\pi s_{n}^{2}}}dx\cdot\frac{1}{2}\log\left(2\pi s_{n}^{2}\right)\cdot\left(1+O\left(\frac{1}{s_{n}}\right)\right)\nonumber\\
		&\qquad\qquad\qquad+\frac{1}{2s_{n}^{2}}\cdot\int_{-\infty}^{+\infty}\frac{x^{2}\mathrm{e}^{-x^{2}}}{\sqrt{2\pi s_{n}^{2}}}dx\cdot\left(1+o\left(1\right)\right)\nonumber\\
		&	=\frac{1}{2}\log\left(2\pi\mathrm{e}s_{n}^{2}\right)+o\left(1\right).	
	\end{align}
	To conclude, we finish our proof by combining (4.6), (4.11)-(4.12) and Lemma 4.1.
\end{proof}

However, the pure Bernoulli distribution above is too special for $\left\{\tilde{\theta}_{n}\right\}_{n\in\mathbb{N}^{+}}$, as they are only supported on integer points, which cannot cover general lattice distributions as we want. For this, we introduce a sequence of independent lattice random variables $\left\{V_{n}\right\}_{n\in\mathbb{N}^{+}}$ on $\Xi$ with $l=1$ and a sequence of independent and identically Bernoulli distributed $\mathrm{Bern}\left(1/2\right)$ random variables $\left\{B_{n}\right\}_{n\in\mathbb{N}^{+}}$, which is independent of $\left\{V_{n}\right\}_{n\in\mathbb{N}^{+}}$, so that the $i$-th angle variable of the new sequence is expressed as $\tilde{\theta}_{i}=V_{i}+B_{i}$. Denote $\left\{\mu_{n}\right\}_{n\in\mathbb{N}^{+}}$ and $\left\{\eta_{n}^{2}\right\}_{n\in\mathbb{N}^{+}}$ as the expectations and finite variances of $\left\{V_{n}\right\}_{n\in\mathbb{N}^{+}}$, then the variance of $S_{n}$ can be currently calculated as $s_{n}^{2}=\sum_{i=1}^{n}\eta_{i}^{2}+n/4$. It can be found that under such case, we not only
maintain the symmetry of the Bernoulli term, but also allow the sequence $\left\{V_{n}\right\}_{n\in\mathbb{N}^{+}}$ to capture the asymmetric part of $\left\{\tilde{\theta}_{n}\right\}_{n\in\mathbb{N}^{+}}$. The Bernoulli randomness can be also understood as  ``smoothing'', just as the Gaussian smoothing step in continuous case. 

\begin{proposition}
	As $n \to \infty$, $H\left(S_{n}\right)-\log\left(s_{n}\right)\to\log\left(2\pi\mathrm{e}\right)/2$.
\end{proposition}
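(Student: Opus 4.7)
The strategy is to reduce Proposition 3.3 to the relative-entropy statement $D_{KL}(S_n') \to 0$ by means of Lemma 3.1, and then to establish this convergence through a local central limit theorem for $S_n = T_n + B_n^*$, where $T_n = \sum_{i=1}^{n} V_i$ and $B_n^* = \sum_{i=1}^{n} B_i \sim \mathrm{Bin}(n, 1/2)$. First I would re-centre the variables $\theta_i = V_i + B_i$, which leaves $H(S_n)$ unchanged by translation invariance of discrete entropy, so that the hypothesis $\kappa_i = 0$ of Lemma 3.1 is satisfied. Lemma 3.1 with $l = 1$ then yields
\begin{equation*}
H(S_n) - \log s_n = \frac{1}{2}\log(2\pi\mathrm{e}) - D_{KL}(S_n') + O(1/s_n),
\end{equation*}
and since $s_n^2 \geq n/4 \to \infty$ the remainder is negligible, so the proposition becomes equivalent to $D_{KL}(S_n') \to 0$.

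The local CLT comes from the factorisation
\begin{equation*}
\phi_{S_n}(t) = \phi_{T_n}(t)\,\phi_{B_n^*}(t), \qquad |\phi_{B_n^*}(t)| = |\cos(t/2)|^{n},
\end{equation*}
in which the Bernoulli factor provides the unconditional estimate $\sup_{\delta \leq |t| \leq \pi}|\phi_{B_n^*}(t)| \leq e^{-c_\delta n}$ for every $\delta \in (0,\pi)$, compensating for the fact that $|\phi_{T_n}|$ is only trivially bounded by $1$. Applying Fourier inversion $\mathbb{P}(S_n = k) = (2\pi)^{-1}\int_{-\pi}^{\pi} e^{-ikt}\phi_{S_n}(t)\,dt$ and splitting at $|t| = \delta_n := s_n^{-1+\epsilon}$, a Taylor expansion of $\log\phi_{S_n}$ on the central piece supplies the Gaussian profile $\exp(-s_n^2 t^2/2)$ after centering, while the outer piece is absorbed by the exponential decay of $|\phi_{B_n^*}|$. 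This produces the pointwise local CLT
\begin{equation*}
\mathbb{P}(S_n = k) = \frac{1}{\sqrt{2\pi s_n^2}}\exp\!\left(-\frac{(k-\mathbb{E}S_n)^2}{2 s_n^2}\right)(1+o(1))
\end{equation*}
uniformly over the central window $|k - \mathbb{E}S_n| \leq s_n^{1+\alpha}$ for a small $\alpha > 0$.

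Equipped with this local CLT I would mimic the dichotomy used in the proof of Proposition 3.2: in the central window, substituting the Gaussian approximation into the sum defining $D_{KL}(S_n')$ (or equivalently $H(S_n)$) causes the main logarithmic terms to cancel, leaving an $o(1)$ residue; outside the window, a Chernoff-type tail bound derived once again from $|\cos(t/2)|^n$ gives a contribution of the form $\exp(-r(n) + \log n + \log r(n))$ paralleling (14), which vanishes as $n \to \infty$. Combining these two contributions yields $D_{KL}(S_n') \to 0$, and via the opening reduction this completes the proof.

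The principal obstacle is producing a local CLT uniformly valid with no Lindeberg-type or higher-moment condition on the $V_i$, which may be degenerate, arbitrarily concentrated, or otherwise ill-behaved; the Bernoulli factor's $|\cos(t/2)|^n$ decay is precisely the mechanism that sidesteps any such hypothesis, but the proof must apportion this decay carefully so that a single estimate covers both the regime $\sum_i \eta_i^2 \gg n/4$, where $\mathrm{Var}(S_n)$ is driven by $T_n$, and the regime $\sum_i \eta_i^2 \ll n/4$, where it is driven by $B_n^*$. A secondary technical point is the recentering invoked to apply Lemma 3.1 when $\kappa_i = \mu_i + 1/2 \neq 0$, which is handled by the translation invariance of discrete entropy.
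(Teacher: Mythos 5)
Your opening reduction via Lemma 3.1 to showing $D_{KL}(S_n')\to 0$ coincides with the paper's closing step, but your route to that convergence is genuinely different: you propose a Fourier-analytic local central limit theorem based on the factorisation $\phi_{S_n}=\phi_{T_n}\phi_{B_n^*}$, whereas the paper invokes Barron's integral representation of relative entropy along the Gaussian semigroup, reduces everything to showing that the standardised Fisher information $J(K_n)$ tends to $0$ and is uniformly integrable in $t$, and verifies a Lindeberg condition for $\{\hat{\omega}_n\}$ to control it. The two routes are not equivalent in strength, and yours has a genuine gap at exactly the point you flag as ``the principal obstacle.''

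The bound $|\phi_{B_n^*}(t)|=|\cos(t/2)|^{n}\le e^{-cnt^{2}}$ on $[-\pi,\pi]$ only forces smallness of the characteristic function for $|t|\gtrsim n^{-1/2}$. A local CLT with uniform multiplicative error $(1+o(1))$ at scale $s_n$ requires $\int_{A/s_n\le|t|\le\pi}|\phi_{S_n}(t)|\,dt=o(1/s_n)$, and when $\sum_i\eta_i^{2}\gg n$ (so $s_n\gg\sqrt{n}$) the intermediate band $1/s_n\lesssim|t|\lesssim n^{-1/2}$ is completely uncontrolled: there the Bernoulli factor is $1-o(1)$ and nothing constrains $|\phi_{T_n}|$. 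This is not a technicality. Take $V_i$ uniform on $\{\pm 2^{i}\}$: then $T_n$ is uniform on an arithmetic progression of $2^{n}$ points with spacing $4$, $s_n\asymp 2^{n}$, and $S_n=T_n+B_n^{*}$ is asymptotically uniform on an interval rather than locally Gaussian at scale $s_n$; your claimed pointwise local CLT is false, and indeed $H(S_n)-\log s_n$ converges to a constant different from $\tfrac12\log(2\pi\mathrm{e})$. So the assertion that the $|\cos(t/2)|^{n}$ decay ``sidesteps any such hypothesis'' on the $V_i$ is incorrect: some non-degeneracy condition — the Lindeberg condition that the paper verifies (available here because the underlying $\hat{\omega}_i$ are uniformly bounded), or boundedness used directly — is indispensable, and even granting it your sketch supplies no estimate for the intermediate frequency band, which is the heart of any non-i.i.d.\ local limit theorem. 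The paper's Fisher-information route avoids the local CLT entirely, which is precisely why it gets by with Lindeberg plus uniform integrability of $J(M_n(t))$ on $(0,1/2)$; if you want to keep the Fourier approach you must either restrict to the bounded-increment setting and prove the intermediate-band estimate there, or import a genuine non-i.i.d.\ local limit theorem with its hypotheses stated.
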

\begin{proof}
	Referring to \cite{MR815975}, it can be first calculated that
	\begin{align}
		D_{KL}\left(\frac{S_{n}+U}{\sqrt{s_{n}^{2}+\frac{1}{12}}}\right)
		&=D_{KL}\left(\frac{1}{\sqrt{2\left(s_{n}^{2}+\frac{1}{12}\right)}}\cdot\left(S_{n}+U\right)+\sqrt{\frac{1}{2}}\cdot Z\right)\nonumber\\
		&\qquad\qquad+\int^{\frac{1}{2}}_{0}J\left(\sqrt{\frac{1-t}{s_{n}^{2}+\frac{1}{12}}}\cdot\left(S_{n}+U\right)+\sqrt{t}\cdot Z\right)\frac{dt}{2\left(1-t\right)},
	\end{align}
	where the random variable $U$ is defined in Lemma 4.1, $J$ is the standardized Fisher information, see for example \cite{MR1122806,MR109101} for introductions, $Z\sim\mathcal{N}\left(\left(\sum_{i=1}^{n}\mu_{i}+n/2\right)/\sqrt{s_{n}^{2}+1/12},1\right)$ which is independent of $\left\{V_{n}\right\}_{n\in\mathbb{N}^{+}},\left\{B_{n}\right\}_{n\in\mathbb{N}^{+}}$ and $U$. If we at the same times define $W_{n}
	\sim\mathcal{N}\left(0,1/\left(12s_{n}^{2}+1\right)\right)$ and a sequence of independent random variables $\left\{Z_{n}\right\}_{n\in\mathbb{N}^{+}}$ with the $i$-th distributed as $\mathcal{N}\left(\mu_{i}+1/2,\eta_{i}^{2}+1/4\right)$, then we may choose to write $Z$ equivalently as $Z=\sum_{i=1}^{n}Z_{i}/\sqrt{2\left(s_{n}^{2}+1/12\right)}+W_{n}$.
	Thus, for $i=1,2,\cdots$, by defining $Y_{i}=\left(V_{i}+B_{i}+Z_{i}\right)/\sqrt{2}$
	with variance $\sigma_{Y_{i}}^{2}$, we obtain from the continuity of variable $U$ that 
	\begin{align}
		&D_{KL}\left(\frac{1}{\sqrt{2\left(s_{n}^{2}+\frac{1}{12}\right)}}\cdot\left(\sum_{i=1}^{n}\left(V_{i}+B_{i}\right)+U\right)+\sqrt{\frac{1}{2}}\cdot Z\right)\nonumber\\
		=&D_{KL}\left(\frac{\sum_{i=1}^{n}Y_{i}}{\sqrt{s_{n}^{2}+\frac{1}{12}}}+\frac{W_{n}}{\sqrt{2}}+\frac{U}{\sqrt{2\left(s_{n}^{2}+\frac{1}{12}\right)}}\right)\nonumber\\
		=&\frac{1}{2}\log\left(2\pi\mathrm{e}\right) -h\left(\sqrt{\frac{1}{2\left(s_{n}^{2}+\frac{1}{12}\right)}}\cdot\left(\sum_{i=1}^{n}\left(V_{i}+B_{i}\right)+U\right)+\frac{\sum_{i=1}^{n}Z_{i}}{\sqrt{2\left(s_{n}^{2}+\frac{1}{12}\right)}}+\frac{W_{n}}{\sqrt{2}}\right)\nonumber\\
		\le& \frac{1}{2}\log\left(2\pi\mathrm{e}\right)-h\left(\frac{\sum_{i=1}^{n}Y_{i}}{\sqrt{s_{n}^{2}+\frac{1}{12}}}\right)\nonumber\\
		=&D\left(\frac{\sum_{i=1}^{n}Y_{i}}{\sqrt{s_{n}^{2}+\frac{1}{12}}}\right)+\frac{1}{2}\log\left(2\pi\mathrm{e}\left(1+\frac{1}{12s_{n}^{2}+1}\right)\right)\nonumber\\
		\to&0,\ \ \ \ \mathrm{as}\ n \to \infty.
	\end{align}
	Meanwhile, for the second line of (4.13), 
	we rewrite the part inside $J$ as
	\begin{equation*}
		K_{n}:=\sqrt{\frac{1-t}{s_{n}^{2}+\frac{1}{12}}}\cdot\left(S_{n}+U\right)+\sqrt{t}\cdot\left(H_{n}+W_{n}\right),
	\end{equation*}
	with variance $\sigma_{K}^{2}$, where
	\begin{equation*}
		H_{n}\sim\mathcal{N}\left(\frac{\sum_{i=1}^{n}\mu_{i}+\frac{n}{2}}{\sqrt{s_{n}^{2}+\frac{1}{12}}},\frac{\sum_{i=1}^{n}\eta_{i}^{2}+\frac{n}{4}}{s_{n}^{2}+\frac{1}{12}}\right).
	\end{equation*}
	Thus, we have
	\begin{equation}
		0\le J\left(K_{n}\right)=\sigma^{2}_{K}I\left(K_{n}\right)-1\le \sigma^{2}_{K}I\left(\sqrt{\frac{1-t}{s_{n}^{2}+\frac{1}{12}}}\cdot S_{n}+\sqrt{t}H_{n}\right)-1,
	\end{equation}
	where $I$ is the corresponding Fisher information.
	In fact, through immediate calculations, $\sigma_{K}^{2}$ is exactly equal to 1. Moreover, since $\lim_{n \to \infty}s_{n}^{2}=\infty$, 
	for the sequence of independent random variables $\left\{\tilde{\theta}_{n}\right\}_{n\in\mathbb{N}^{+}}$, 
	we obtain that for any $\varepsilon>0$,
	\begin{align}
		&\frac{1}{s_{n}^{2}}\cdot\sum_{i=1}^{n}\mathbb{E}\left[\left(\tilde{\theta}_{i}-\left(\mu_{i}+\frac{1}{2}\right)\right)^{2}\cdot\mathbb{I}_{\left\{\left|\tilde{\theta}_{i}-\left(\mu_{i}+\frac{1}{2}\right)\right|\ge \varepsilon s_{n}\right\}}\right]\nonumber\\
		=&\frac{1}{s_{n}^{2}}\cdot\sum_{i=1}^{n}\mathbb{E}\left[\left(V_{i}-\mu_{i}+B_{i}-\frac{1}{2}\right)^{2}\cdot\mathbb{I}_{\left\{\left|\tilde{\theta}_{i}-\left(\mu_{i}+\frac{1}{2}\right)\right|\ge \varepsilon s_{n}\right\}}\right]\nonumber\\
		\le& \frac{1}{s_{n}^{2}}\cdot\sum_{i=1}^{n}\mathbb{E}\left[\left(V_{i}-\mu_{i}+1\right)^{2}\cdot\mathbb{I}_{\left\{\left|V_{i}-\mu_{i}\right|\ge \varepsilon s_{n}-1\right\}}\right]\nonumber\\
		\le& \frac{1}{s_{n}^{2}}\cdot\sum_{i=1}^{n}\left[\eta_{i}^{2}\cdot\mathbb{P}\left(\left|V_{i}-\mu_{i}\right|\ge\frac{ \varepsilon s_{n}}{2}\right)\right]+\frac{1}{s_{n}^{2}}\cdot\sum_{i=1}^{n}\mathbb{P}\left(\left|V_{i}-\mu_{i}\right|\ge\frac{ \varepsilon s_{n}}{2}\right)\nonumber\\
		&\qquad\qquad+\frac{2}{s_{n}^{2}}\cdot\sum_{i=1}^{n}\left[\mathbb{E}\left(V_{i}-\mu_{i}\right)^{2}\right]^{\frac{1}{2}}\cdot\left[\mathbb{P}\left(\left|V_{i}-\mu_{i}\right|\ge\frac{ \varepsilon s_{n}}{2}\right)\right]^{\frac{1}{2}}.
	\end{align}
	Since each $\mathbb{E}\left|V_{i}-\mu_{i}\right|^{2}=\eta_{i}^{2}<\infty$, we have $	\mathbb{P}\left(\left|V_{i}-\mu_{i}\right|\ge\frac{ \varepsilon s_{n}}{2}\right)\le4\eta_{i}^{2}/\varepsilon^{2} s_{n}^{2}$, 
	then for the third line of (4.16),
	\begin{equation*}
		\lim_{n\to \infty}\frac{1}{s_{n}^{2}}\cdot\sum_{i=1}^{n}\mathbb{P}\left(\left|V_{i}-\mu_{i}\right|\ge\frac{ \varepsilon s_{n}}{2}\right)\le  \lim_{n\to \infty}\frac{4\sum_{i=1}^{n}\eta^{2}_{i}}{\varepsilon^{2}s_{n}^{4}}=0.
	\end{equation*}
	Similarly, for the first and second lines of (4.16), as $n\to \infty$,
	\begin{align*}
		&\frac{1}{s_{n}^{2}}\cdot\sum_{i=1}^{n}\left[\eta_{i}^{2}\cdot\mathbb{P}\left(\left|V_{i}-\mu_{i}\right|\ge\frac{ \varepsilon s_{n}}{2}\right)\right]
		\le \frac{4\sum_{i=1}^{n}\eta_{i}^{4}}{\varepsilon^{2}s_{n}^{4}}\to 0,\\
		&\frac{2}{s_{n}^{2}}\cdot\sum_{i=1}^{n}\left[\mathbb{E}\left(V_{i}-\mu_{i}\right)^{2}\right]^{\frac{1}{2}}\cdot\left[\mathbb{P}\left(\left|V_{i}-\mu_{i}\right|\ge\frac{ \varepsilon s_{n} }{2}\right)\right]^{\frac{1}{2}}\le \frac{4\sum_{i=1}^{n}\eta_{i}^{2}}{\varepsilon^{2}s_{n}^{3}}\to0.
	\end{align*}
	Therefore, for any $\varepsilon>0$, we obtain
	\begin{equation*}
		\lim_{n \to \infty}\frac{1}{s_{n}^{2}}\cdot\sum_{i=1}^{n}\mathbb{E}\left[\left(\tilde{\theta}_{i}-\left(\mu_{i}+\frac{1}{2}\right)\right)^{2}\cdot\mathbb{I}_{\left\{\left|\tilde{\theta}_{i}-\left(\mu_{i}+\frac{1}{2}\right)\right|\ge \varepsilon s_{n}\right\}}\right]=0,
	\end{equation*}
	which leads to the Lindeberg condition for the sequence of independent random variables $\left\{\tilde{\theta}_{n}\right\}_{n\in\mathbb{N}^{+}}$. 
	Going further, the asymptotically Gaussian property of these two sequences is established.
	Back to (4.15), it can be estimated that 
	\begin{eqnarray*}
		0\le \lim_{n \to \infty}J\left(K_{n}\right)
		&\le \lim_{n \to \infty}\left[\left(1-t\right)\cdot \frac{s_{n}^{2}+\frac{1}{12}}{s_{n}^{2}}+ \frac{t\cdot\left(s_{n}^{2}+\frac{1}{12}\right)}{s_{n}^{2}}\right]-1=0,
	\end{eqnarray*}
	that is, $\lim_{n\to \infty}J\left(K_{n}\right)=0$, and, combining (4.14), we conclude that (4.13) converges to 0 as $n \to \infty$. 
	
	We are left to check the integrability of $J$. 
	If we define
	\begin{equation*}
		\hat{Z}\sim\mathcal{N}\left(\frac{\sum_{i=1}^{n}\mu_{i}}{\sqrt{s_{n}^{2}+\frac{1}{12}}},\frac{\sum_{i=1}^{n}\eta_{i}^{2}}{\sqrt{s_{n}^{2}+\frac{1}{12}}}\right), \ \ \ \  \tilde{Z}\sim\mathcal{N}\left(\frac{n}{2\sqrt{s_{n}^{2}+\frac{1}{12}}},\frac{\frac{1}{12}+\frac{n}{4}}{s_{n}^{2}+\frac{1}{12}}\right),
	\end{equation*} 
	and
	\begin{equation*}
		M_{n}=M_{n}\left(t\right):=\sqrt{\frac{1-t}{s_{n}^{2}+\frac{1}{12}}}\cdot\left(\sum_{i=1}^{n}B_{i}+U\right)+\sqrt{t}\tilde{Z},
	\end{equation*}
	with variance $\sigma_{M}^{2}$, then $K_{n}$ can be equivalently written as $K_{n}=M_{n}+\sqrt{\frac{1-t}{s_{n}^{2}+\frac{1}{12}}}\cdot\sum_{i=1}^{n}V_{i}+\sqrt{t}\hat{Z}$.
	Thus, it can be calculated that
	\begin{align}
		J\left(K_{n}\right)&=\sigma_{K}^{2}I\left(K_{n}\right)-1\nonumber\\
		&\le \left(\sigma_{M}^{2}+\frac{\sum_{i=1}^{n}\eta_{i}^{2}}{s_{n}^{2}+\frac{1}{12}}\right)I\left(M_{n}\right)-1\nonumber\\
		&=\frac{\sum_{i=1}^{n}\eta_{i}^{2}}{s_{n}^{2}+\frac{1}{12}}\cdot\left(J\left(M_{n}\right)+1\right)\cdot\frac{1}{\sigma^{2}_{M}}+J\left(M_{n}\right)\nonumber\\
		&=J\left(M_{n}\right)\cdot\left(1+\frac{\sum_{i=1}^{n}\eta_{i}^{2}}{s_{n}^{2}+\frac{1}{12}}\right)+\frac{\sum_{i=1}^{n}\eta_{i}^{2}}{s_{n}^{2}+\frac{1}{12}},
	\end{align}
	so that referring to \cite{MR815975},
	\begin{align}
		D_{KL}\left(\frac{\sum_{i=1}^{n}B_{i}+U}{\sqrt{s_{n}^{2}+\frac{1}{12}}}\right)
		&=D_{KL}\left(\frac{\sum_{i=1}^{n}B_{i}+U}{\sqrt{2\left(s_{n}^{2}+\frac{1}{12}\right)}}+\frac{\tilde{Z}}{\sqrt{2}}\right)\nonumber\\
		&\qquad\qquad+\int_{0}^{\frac{1}{2}}J\left(\sqrt{\frac{1-t}{s_{n}^{2}+\frac{1}{12}}}\cdot\left(\sum_{i=1}^{n}B_{i}+U\right)+\sqrt{t}\tilde{Z}\right)\frac{dt}{2\left(1-t\right)}\nonumber\\
		&=	D_{KL}\left(\frac{\sum_{i=1}^{n}B_{i}+U}{\sqrt{2\left(s_{n}^{2}+\frac{1}{12}\right)}}+\frac{\tilde{Z}}{\sqrt{2}}\right)+\int_{0}^{\frac{1}{2}}J\left(M_{n}\left(t\right)\right)\frac{dt}{2\left(1-t\right)}.
	\end{align}
	By taking advantage of Lemma 4.1 and Proposition 4.1, we estimate that 
	\begin{align*}
		&D_{KL}\left(\frac{\sum_{i=1}^{n}B_{i}+U}{\sqrt{s_{n}^{2}+\frac{1}{12}}}\right),\ \ D_{KL}\left(\frac{\sum_{i=1}^{n}B_{i}}{\sqrt{s_{n}^{2}+\frac{1}{12}}}\right),\\ &D_{KL}\left(\frac{\sum_{i=1}^{n}B_{i}+U}{\sqrt{s_{n}^{2}+\frac{1}{12}}}+\tilde{Z}\right),\ \ D_{KL}\left(\frac{\sum_{i=1}^{n}B_{i}}{\sqrt{s_{n}^{2}+\frac{1}{12}}}+\tilde{Z}\right)
	\end{align*}
	all converge to 0 as $n \to \infty$.
	Thus, we conclude from (4.17)-(4.18) that $J\left(K_{n}\right)$ together with $J\left(M_{n}\right)$ is uniformly integrable on $(0,1/2)$.
	Finally, by adopting the definition of $U$ and Theorem 4.1 again, we deduce that as $n \to \infty$,
	\begin{eqnarray*}
		D_{KL}\left(\frac{S_{n}+U}{s_{n}}\right)=D_{KL}\left(\frac{S_{n}}{s_{n}}\right)
		=\frac{1}{2}\log\left(2\pi\mathrm{e}\right)-\left[H\left(S_{n}\right)-\log \left(s_{n}\right)\right]
		\to0.
	\end{eqnarray*}
	Our proof is complete now.
\end{proof}

\begin{remark}
	In fact, the Bernoulli sequence $\left\{B_{n}\right\}_{n\in\mathbb{N}^{+}}$ can be also fulfilled with different probability parameters just like in Proposition 4.2, which will not actually affect the result of Proposition 4.3, as long as 
	the condition $\lim_{n \to \infty}\sum_{i=1}^{n}\left(\eta_{i}^{2}+p_{i}\left(1-p_{i}\right)\right)=\infty$ is set. Since
	deriving our final conclusion requires focusing on the sequence
	involving independent and identically distributed   $\left\{B_{n}\right\}_{n\in\mathbb{N}^{+}}$, we will proceed with our analysis of such kind of sequence, aided by Proposition 4.1 and Proposition 4.3.
\end{remark}

It is worth noting that not all lattice-valued random variables admit an explicit decomposition of the form $\tilde{\theta}_{n}=V_{n}+B_{n}$. To address this limitation, 
for any lattice variable defined on $\Xi$ with  $l=1$, we write:
\begin{equation}
	\tilde{\theta}_{n}\sim V_{n}+W_{n}B_{n},\ \ \ \ n=1,2,\cdots,
\end{equation}
which is fulfilled with probability mass function $p_{n}$, where $V_{n}\in \mathbb{Z}$, $B_{n}\sim\mathrm{Bern}\left(1/2\right)$ and $W_{n}\sim\mathrm{Bern}\left(q_{n}\right)$, and they are independent of each other. Here, (4.19) is explicitly decomposed into the lattice part $\left\{V_{n}\right\}_{n \in\mathbb{N}^{+}}$ and independent Bernoulli noise $\left\{B_{n}\right\}_{n\in\mathbb{N}^{+}}$, the sequence $\left\{W_{n}\right\}_{n\in\mathbb{N}^{+}}$ plays the role of determining whether a Bernoulli component exists in the distribution of the independent (not necessarily identically distributed) sequence $\left\{\tilde{\theta}_{n}\right\}_{n\in\mathbb{N}^{+}}$.
If we denote $p_{V_{i},W_{i}}$ as the joint probability mass function of $V_{i}$ and $W_{i}$, then it can be deduced that for $k\in \mathbb{Z}$ and $i=1,2,\cdots,n$,
\begin{align*}
	\mathbb{P}\left(X_{i}=k\right)&=p_{i}\left(k\right)\\
	&=p_{V_{i},W_{i}}\left(k,0\right)+\frac{p_{V_{i},W_{i}}\left(k,1\right)}{2}+\frac{p_{V_{i},W_{i}}\left(k-1,1\right)}{2}\\
	&=p_{i}\left(k\right)-\frac{1}{2}\left(p_{V_{i},W_{i}}\left(k,1\right)+p_{V_{i},W_{i}}\left(k-1,1\right)\right)\nonumber\\
	&\qquad\qquad\qquad\qquad+\frac{p_{V_{i},W_{i}}\left(k,1\right)}{2}+\frac{p_{V_{i},W_{i}}\left(k-1,1\right)}{2},
\end{align*}
with $p_{V_{i},W_{i}}\left(k,1\right)/2\le p_{i}\left(k\right) $ and $p_{V_{i},W_{i}}\left(k-1,1\right)/2\le p_{i}\left(k+1\right)$.
Therefore, we obtain that for $i=1,2,\cdots,n$, 
\begin{align}
	&p_{V_{i},W_{i}}\left(k,1\right)=\min\left\{p_{i}\left(k\right),p_{i}\left(k+1\right)\right\}\nonumber,\\
	&p_{V_{i},W_{i}}\left(k,0\right)=p_{i}\left(k\right)-\frac{1}{2}\left[p_{V_{i},W_{i}}\left(k,1\right)+p_{V_{i},W_{i}}\left(k-1,1\right)\right],\nonumber\\
	&q_{i}=\sum_{k}\min\left\{p_{V_{i},W_{i}}\left(k-1,1\right),p_{V_{i},W_{i}}\left(k,1\right)\right\}>0,	
\end{align}
where the inequality in (4.20) comes from the fact that  $l=1$. 

\begin{lemma}
	There is a random variable $V^{(n)}$ with values in $\left\{na+kl:k\in\mathbb{Z} \right\}$ and a $W^{(n)} \sim\mathrm{Bern}\left(q^{(n)}\right)$,
	such that $S_{n}\sim V^{(n)}+W^{(n)}B$, where $B\sim\mathrm{Bern}\left(1/2\right)$ is independent of $\left(V^{(n)},W^{(n)}\right)$ and $q^{(n)}\to 1$ as $n \to \infty$. Furthermore, $\mathrm{Var}\left(S_{n}\mid W^{(n)}=1\right)=\sum_{i=1}^{n}\mathrm{Var}\left(\tilde{\theta}_{i}\right)\left(1+o\left(1\right)\right)$.
\end{lemma}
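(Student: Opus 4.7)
The plan is to lift the individual Bernoulli-part decomposition (23)--(24) from each summand $\hat{\omega}_i$ to the partial sum $S_n$. First I apply (23) coordinatewise: for each $i$, introduce mutually independent triples $(V_i,W_i,B_i)$ (and independent across $i$) with $V_i\in\Xi$, $W_i\sim\mathrm{Bern}(q_i)$ for $q_i>0$ given by (24), and $B_i\sim\mathrm{Bern}(1/2)$, so that $\hat{\omega}_i\sim V_i+W_iB_i$. This gives $S_n\sim\sum_{i=1}^{n}V_i+T_n$ with $T_n:=\sum_{i=1}^{n}W_iB_i$, and reduces the task to extracting a single $\mathrm{Bern}(1/2)$ factor from $T_n$.

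Define $Q_n:=\sum_{i=1}^{n}W_i$, $q^{(n)}:=\mathbb{P}(Q_n\ge 1)=1-\prod_{i=1}^{n}(1-q_i)$, and $W^{(n)}:=\mathbb{I}_{\{Q_n\ge 1\}}$. On $\{Q_n\ge 1\}$ let $I^{\ast}:=\min\{i:W_i=1\}$ and set $B:=B_{I^{\ast}}$; on $\{Q_n=0\}$, draw $B$ as an independent $\mathrm{Bern}(1/2)$. Finally put $V^{(n)}:=S_n-W^{(n)}B$. On $\{Q_n=0\}$ this equals $\sum_{i}V_i$, while on $\{Q_n\ge 1\}$ it equals $\sum_{i}V_i+\sum_{i\ne I^{\ast}}W_iB_i$, both belonging to $\{na+kl:k\in\mathbb{Z}\}$. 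Conditioning on the $\sigma$-algebra generated by $\{V_i\}$ and $\{W_i\}$: then $I^{\ast}$ is determined, the $B_i$'s remain i.i.d.\ $\mathrm{Bern}(1/2)$ and independent of $\{V_j\}$, so $B_{I^{\ast}}$ is $\mathrm{Bern}(1/2)$ and independent of the pair $(V^{(n)},W^{(n)})$; a short check of $\mathbb{P}(A,B=1)$ by splitting on $\{Q_n=0\}$ vs.\ $\{Q_n\ge 1\}$ confirms the required joint law $S_n\sim V^{(n)}+W^{(n)}B$.

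What remains is to control $q^{(n)}$ and the conditional variance. For $q^{(n)}\to 1$ it suffices to show $\sum_{i}q_i\to\infty$; using $V_i\perp W_iB_i$ one has $\zeta_i^{2}=\mathrm{Var}(V_i)+(q_i/2)(1-q_i/2)$, and the standing hypothesis $s_n^{2}\to\infty$ together with the non-degeneracy encoded in (24) forces the Bernoulli component $\mathrm{Var}(T_n)\le \sum q_i/2$ to be unbounded. Granting this, $1-q^{(n)}\le\exp(-\sum q_i)\to 0$. For the variance, the law of total variance applied with $\mathrm{Var}(T_n\mid Q_n=k)=k/4$ and $\mathbb{E}[T_n\mid Q_n=k]=k/2$ yields
\begin{equation*}
\mathrm{Var}(T_n\mid Q_n\ge 1)=\frac{\mathrm{Var}(T_n)}{q^{(n)}}-\frac{(\sum_{i}q_i)^{2}(1-q^{(n)})}{4(q^{(n)})^{2}},
\end{equation*}
and the correction is $o(\mathrm{Var}(T_n))$ because $(\sum q_i)^{2}e^{-\sum q_i}\to 0$. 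Combining with $\mathrm{Var}(\sum_{i}V_i\mid Q_n\ge 1)=\sum_{i}\mathrm{Var}(V_i)$ (using $V\perp W$) gives $\mathrm{Var}(S_n\mid W^{(n)}=1)=\sum_{i=1}^{n}\zeta_i^{2}(1+o(1))$. The main obstacle I anticipate is precisely the step $\sum q_i\to\infty$: it requires quantitatively relating the Bernoulli rates $q_i$ produced by the min-matching formula (24) to the variances $\zeta_i^{2}$, so as to exclude pathological lattice distributions that store all their variance in the $V$-component.
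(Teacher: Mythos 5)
Your proposal takes essentially the same route as the paper's proof: both lift the Bernoulli decomposition (23) to $S_n$, set $W^{(n)}=\mathbb{I}_{\{N_n\ge 1\}}$ with $q^{(n)}=1-\prod_{i=1}^{n}\left(1-q_i\right)$, peel off a single $\mathrm{Bern}\left(1/2\right)$ factor from the surviving Bernoulli summands, and obtain the conditional variance by showing the $\{W^{(n)}=0\}$ contribution is negligible (the paper via crude $O\left(n\right)$ and $O\left(n^{2}\right)$ bounds on the conditional moments, you via the law of total variance --- an immaterial difference). The obstacle you flag, namely deriving $\sum_i q_i\to\infty$ quantitatively from (24), is not addressed in the paper either, which simply asserts $q^{(n)}\to 1$, so your argument is at the same level of completeness as the paper's own.
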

\begin{proof}
	The proof can be found in Appendix.
\end{proof}

Now we are ready to provide our  main result and proof. 
\begin{theorem}
	As $n \to \infty$, the sequence $\left\{\tilde{\theta}_{n} \right\}_{n\in\mathbb{N}^{+}}$ satisfies that
	\begin{align*}
		\lim_{n \to \infty}D_{KL}\left(S_{n}\right)=\lim_{n \to \infty}\left[H\left(S_{n}\right)+\sum_{j=1}^{d}\log l_{j}-\frac{1}{2}\log\left(\left(2\pi\mathrm{e}\right)^{d}\cdot\mathrm{det}\left(\Sigma_{n}^{2}\right)\right)\right]=0,
	\end{align*}
	where $\Sigma^{2}_{n}$ is a $d\times d$ diagonal matrix defined in (4.27). Meanwhile, there exists some $d$-dimensional quantized Gaussian random variable $Z$ matching the variance and expectation of $S_{n}$, such that
	\begin{equation*}
		\lim_{n \to \infty}\left\|S_{n}-Z\right\|_{TV}=0.
	\end{equation*}
\end{theorem}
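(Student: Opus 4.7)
The strategy is to reduce the $d$-dimensional claim to independent one-dimensional statements, and then invoke the machinery already built in Lemma 3.1, Propositions 3.1--3.3, and Lemma 3.2.

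First, I would exploit the coordinate-wise independence assumed in the model: the $d$-dimensional partial sum factorizes as $S_n = (S_n^{(1)},\ldots,S_n^{(d)})$ with independent marginals living on lattices of spacings $l_1,\ldots,l_d$. For product measures the joint Shannon entropy and relative entropy both split coordinatewise, and because $\Sigma_n^2$ is diagonal one has $\det(\Sigma_n^2)=\prod_{j=1}^d \sigma_{n,j}^2$ with $\sigma_{n,j}^2=\mathrm{Var}(S_n^{(j)})$. Rewriting the first bracket as
\begin{equation*}
\sum_{j=1}^d \left[H(S_n^{(j)})-\log(\sigma_{n,j}/l_j)-\tfrac{1}{2}\log(2\pi\mathrm{e})\right],
\end{equation*}
reduces the first two claims to proving, for each fixed $j$, the one-dimensional limits
\begin{equation*}
H(S_n^{(j)})-\log(\sigma_{n,j}/l_j)\to \tfrac{1}{2}\log(2\pi\mathrm{e}), \qquad D_{KL}(S_n^{(j)})\to 0.
\end{equation*}

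Second, I would handle each one-dimensional coordinate. After rescaling by $1/l_j$ (an operation that preserves discrete entropy) we may assume $l_j=1$. Since a general lattice-valued sequence $\{\hat{\omega}_n^{(j)}\}$ need not admit the direct $V_n+B_n$ splitting required by Proposition 3.3, I would instead apply Lemma 3.2 to $S_n^{(j)}$ itself, obtaining $S_n^{(j)}\sim V^{(n)}+W^{(n)}B$ with $\mathbb{P}(W^{(n)}=1)=q^{(n)}\to 1$ and $\mathrm{Var}(S_n^{(j)}\mid W^{(n)}=1)=\sigma_{n,j}^2(1+o(1))$. Conditional on $\{W^{(n)}=1\}$ the sum carries an explicit Bernoulli smoothing, to which Proposition 3.3 (converted between Shannon and relative entropy through Lemma 3.1) applies and supplies both limits. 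The unconditional statements would then be recovered by combining the conditional estimates via the standard mixture inequality $H(X)\le H(W^{(n)})+\sum_{w}\mathbb{P}(W^{(n)}=w)H(X\mid W^{(n)}=w)$ and its KL analogue, using $\mathbb{P}(W^{(n)}=0)=1-q^{(n)}\to 0$ together with the universal upper bound (8) on the conditional tail entropy to annihilate the minor piece.

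For the total variation claim, let $Z$ be the $d$-dimensional quantized Gaussian on the product lattice $\Xi_1\times\cdots\times\Xi_d$ whose expectation and covariance match those of $S_n$ (such $Z$ has independent Gaussian-quantized coordinates by diagonality of $\Sigma_n^2$). Pinsker's inequality (Definition 2.1) then gives
\begin{equation*}
\left\|S_n-Z\right\|_{TV}\le \sqrt{\tfrac{1}{2}D_{KL}(S_n)}\to 0,
\end{equation*}
directly from the second asymptotic proved in the previous step.

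The hard part will be the mixture accounting in the second step: after decomposing $S_n^{(j)}$ through Lemma 3.2, one must verify that the $o(1)$ variance correction in $\mathrm{Var}(S_n^{(j)}\mid W^{(n)}=1)$ does not corrupt the logarithmic normalization $\log(\sigma_{n,j}/l_j)$, and that the contribution of $\{W^{(n)}=0\}$, despite its vanishing probability, is controlled sharply enough in Shannon entropy (whose logarithmic scale is less forgiving than relative entropy) by the crude bound (8). The coordinate factorization and the Pinsker step are then essentially bookkeeping.
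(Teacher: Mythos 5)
Your coordinate-wise factorization and the final Pinsker step match the paper, but there is a genuine gap at the heart of your one-dimensional argument: you apply Lemma 3.2 to the whole partial sum $S_n^{(j)}$ and then claim that, conditional on $\{W^{(n)}=1\}$, ``the sum carries an explicit Bernoulli smoothing, to which Proposition 3.3 applies.'' Lemma 3.2 exposes only a \emph{single} $\mathrm{Bern}(1/2)$ component: conditionally on $W^{(n)}=1$ you have $V^{(n)}+B$ with one Bernoulli added to an otherwise arbitrary lattice variable. Proposition 3.3 is not applicable to this object --- its entire mechanism (the Fisher-information integral, the Lindeberg verification, the convergence $J(K_n)\to 0$) rests on the sum containing $n$ \emph{independent} Bernoulli terms whose cumulative variance $n/4$ diverges; that divergence is what smooths the distribution enough to force the entropy up to the Gaussian value. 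A single $O(1)$ Bernoulli perturbation of an arbitrary $V^{(n)}$ gives no lower bound on $H(S_n^{(j)})$ of the form $\frac{1}{2}\log(2\pi\mathrm{e}\,\sigma_{n,j}^2)-o(1)$; the conditional law could remain far from Gaussian in entropy.

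The paper's proof avoids this by performing the Bernoulli-part decomposition \emph{blockwise}: it groups the summands into $[n/M]$ blocks, applies the decomposition to each block to get $S_i^{(M)}=V_i^{(M)}+W_i^{(M)}B_i$ with success probabilities $q_i^{(M)}=1-\prod_j(1-q_j)$ close to $1$ for $M$ large, and then uses Hoeffding's inequality to show that with overwhelming probability at least $\sum_i q_i^{(M)}-[n\epsilon/2M]$ blocks carry a live Bernoulli. Conditional on that event the sum contains on the order of $n(1-\epsilon)/M$ independent $\mathrm{Bern}(1/2)$ variables, which is exactly the setting where Proposition 3.3 (together with Proposition 3.1 and Lemma 3.1) delivers the matching lower bound; the upper bound and the $D_{KL}\to 0$ conversion then follow as you describe. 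To repair your proposal you would need to replace the single global application of Lemma 3.2 with this block decomposition (or an equivalent device producing a divergent number of independent smoothing components), and replace your mixture-inequality accounting for $\{W^{(n)}=0\}$ with the Hoeffding concentration estimate on the number of successful blocks.
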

\begin{proof}
	Suppose that $l=1$ and adopt the notation 
	$s_{n}^{2}=\sqrt{\sum_{k=1}^{n}\mathrm{Var}\left(\tilde{\theta}_{k}\right)}$. For $d=1$, 
	we choose a large enough integer $M$ such that when $1\le i \le [n/M]$, 
	\begin{equation*}
		S_{n}=\sum_{i=1}^{[n/M]}\sum_{j=\left(i-1\right)M+1}^{iM}\tilde{\theta}_{j}=\sum_{i=1}^{[n/M]}S_{i}^{(M)}:=\sum_{i=1}^{[n/M]}\left(V_{i}^{(M)}+W_{i}^{(M)}B_{i}\right).
	\end{equation*}
	Specifically, for $i=1,2,\cdots,[n/M]$, $W_{i}^{(M)}=\mathbb{I}_{\left\{\sum_{j=\left(i-1\right)M+1}^{iM}W_{j}\ge 1\right\}}\sim\mathrm{Bern}\left(q_{i}^{(M)}\right)$, 
	where $q_{i}^{(M)}=1-\prod_{j=\left(i-1\right)M+1}^{iM}\left(1-q_{j}\right)$.
	Furthermore, we introduce $W^{(M)}=\left(W^{(M)}_{1},\cdots,W^{(M)}_{[n/M]}\right)$ and set $A_{M}$ as 
	\begin{equation*}
		A_{M}:=\left\{w\mid w=\left(w_{1},\cdots,w_{[n/M]}\right)\in \left\{0,1\right\}^{[n/M]} \right\},
	\end{equation*}
	with the condition that there are at least $ \sum_{i=1}^{[n/M]}q_{i}^{(M)}-[n\epsilon/2M]$ values of $i$ satisfying $w_{i}=1$, where $\epsilon>0$ is arbitrary.
	Thus, if we at the same time denote $\bar{W}^{(M)}$ as a vector such that its $i$-th components $\bar{W}_{i}^{(M)}$ are all equal to 1, which is a special case of $A_{M}$, then the lower bound of $H\left(S_{n}\right)$ can be estimated to satisfy
	\begin{align}
		H\left(S_{n}\right)
		&\ge H\left(\sum_{i=1}^{[n/M]}\left(V_{i}^{(M)}+W^{(M)}_{i}B_{i}\right)\mid W_{1}^{(M)},\cdots,W^{(M)}_{[n/M]}\right)\nonumber\\
		&\ge	 \sum_{w\in A_{M}}\mathbb{P}\left(W^{(M)}=w\right)\cdot H\left(\sum_{1\le i \le\left[\frac{n}{M}\right]:\ w_{i}=1 }\left(V_{i}^{(M)}+W^{(M)}_{i}B_{i}\right)\mid W^{(M)}=w\right)\nonumber\\
		&\ge	 \mathbb{P}\left(\sum_{i=1}^{[n/M]}W_{i}^{(M)}\ge\sum_{i=1}^{[n/M]}q_{i}^{(M)}-\left[\frac{n\epsilon}{2M}\right] \right)\nonumber5\\
		&\qquad\qquad\times H\left(\sum_{l=1}^{\sum_{j=1}^{[n/M]}q_{j}^{(M)}-\left[\frac{n\epsilon}{2M}\right]}\left(V_{i}^{(M)}+B_{i}\right)\mid \bar{W}^{(M)}=\mathbf{1}\right)\nonumber\\
		&\ge \left(1-\mathrm{exp}\left\{-2\left[\frac{n}{M}\right]^{-1}\left[\frac{n\epsilon}{2M}\right]^{2}\right\}\right)\nonumber\\
		&\qquad\qquad\times\frac{1}{2}\log\left(2\pi\mathrm{e}\sum_{i=1}^{\sum_{j=1}^{[n/M]}q_{j}^{(M)}-\left[\frac{n\epsilon}{2M}\right]}\mathrm{Var}\left(V_{i}^{(M)}+B_{i}\mid W_{i}^{(M)}=1\right)\right) \\
		&\ge \frac{1}{2}\log\left(2\pi\mathrm{e}\left(\sum_{i=1}^{\sum_{j=1}^{[n/M]}q_{j}^{(M)}-\left[\frac{n\epsilon}{2M}\right]}\mathrm{Var}\left(V_{i}^{(M)}+B_{i}\mid W_{i}^{(M)}=1\right)\right)\right) -o\left(1\right)\nonumber\\
		&= \frac{1}{2}\log\left(2\pi\mathrm{e}\left(\sum_{i=1}^{\sum_{j=1}^{[n/M]}q_{l,j}^{(M)}-\left[\frac{n\epsilon}{2 M}\right]}\sum_{k=\left(i-1\right)M+1}^{iM}\mathrm{Var}\left(\tilde{\theta}_{k}\right)\left(1-\epsilon\right)\right)\right) -o\left(1\right)\\
		&=\frac{1}{2}\log\left(2\pi\mathrm{e}\left(\sum_{i=1}^{M\sum_{j=1}^{[n/M]}q_{j}^{(M)}-M\left[\frac{n\epsilon}{2M}\right]}\mathrm{Var}\left(\tilde{\theta}_{k}\right)\right)\right)+\frac{1}{2}\log\left(1-\epsilon\right) -o\left(1\right),
	\end{align}
	where (4.21) is obtained from the Hoeffding inequality \cite{MR144363} and Theorem 4.2,  while (4.22) is deduced from Proposition 4.1. More precisely, if we choose $M$ to be large enough such that $	\sum_{j=1}^{[n/M]}q_{j}^{(M)}\ge n\left(1-\epsilon/2\right)/M$,
	this choice leads to
	\begin{equation*}
		M\sum_{j=1}^{[n/M]}q_{j}^{(M)}-M\left[\frac{n\epsilon}{2M}\right]\ge n\left(1-\frac{\epsilon}{2}\right)-\frac{n\epsilon}{2}\ge \left[n\left(1-\epsilon\right)\right].
	\end{equation*}
	Meanwhile, under the boundedness of each $\mathrm{Var}\left(\tilde{\theta}_{i}\right)\ge 1/4$, we obtain $\sum_{i=\left[n\left(1-\epsilon\right)\right]}^{n}\mathrm{Var}\left(\tilde{\theta}_{i}\right)/s_{n}^{2}=O\left(\epsilon\right)$,
	then it can be deduced that
	\begin{equation*}
		\sum_{i=1}^{\left[n\left(1-\epsilon\right)\right]}\mathrm{Var}\left(\tilde{\theta}_{i}\right)=s_{n}^{2}-\sum_{i=\left[n\left(1-\epsilon\right)\right]+1}^{n}\mathrm{Var}\left(\tilde{\theta}_{i}\right)=\left(1-O\left(\epsilon\right)\right)\cdot s_{n}^{2}.
	\end{equation*}
	Based on these above, (4.23) turns to
	\begin{align}
		H\left(S_{n}\right)&\ge \frac{1}{2}\log\left(2\pi\mathrm{e}\left(\sum_{i=1}^{\left[n\left(1-\epsilon\right)\right]}\mathrm{Var}\left(\tilde{\theta}_{i}\right)\right)\right)+\frac{1}{2}\log\left(1-\epsilon\right) -o\left(1\right)\nonumber\\
		&= \frac{1}{2}\log\left(2\pi\mathrm{e}s_{n}^{2}\right)+\log\left(1-O\left(\epsilon\right)\right) -o\left(1\right).
	\end{align}
	At the same time, in virtue of Proposition 4.3, we can immediately deduce that 
	\begin{equation}
		H\left(S_{n}\right)\le \frac{1}{2}\log\left(2\pi\mathrm{e}s_{n}^{2}\right)-\log l+o\left(1\right).
	\end{equation}
	Combining (4.24)-(4.25), we obtain 
	\begin{equation}
		\lim_{n \to \infty}\left[H\left(S_{n}\right)+\log l-\frac{1}{2}\left(2\pi\mathrm{e}s_{n}^{2}\right)\right]=0,\ \ \ \ \lim_{n \to \infty}D\left(S_{n}\right)=0,
	\end{equation}
	from Lemma 4.1 and the arbitrariness of $\epsilon>0$.
	Notably, Pinsker's inequality directly implies convergence in total variation norm whenever the relative entropy tends to zero. Therefore, 
	based on (4.26), we are naturally led to
	\begin{align*}
		\left\|S_{n}-Z\right\|_{TV}&\le \left\|S_{n}-Z_{n}\right\|_{TV}+\left\|Z_{n}-Z\right\|_{TV}
		\le\sqrt{
			\frac{1}{2}D\left(S_{n}\right)}+ \left\|Z_{n}-Z\right\|_{TV}
		\to 0,\ \ \mathrm{as}\ n \to \infty,	
	\end{align*}
	where $Z_{n}$ is constructed as the quantised Gaussian for the definition of $D_{KL}\left(S_{n}\right)$ and $Z$ follows a Gaussian distribution with the same expectation and finite variance as $S_{n}$.
	
	Next, we extend both the process $\left\{\tilde{\theta}_{n}\right\}_{n\in\mathbb{N}^{+}}$ as well as its partial sum to arbitrary finite dimensions $d \ge 1$. To build this, we suppose that the $i$-th asymptotic covariance matrix is denoted as $s^{2}_{n,i}<\infty$ 
	and construct $\Xi^{(d)}:=\Xi_{1}\times\cdots\times\Xi_{d}$.
	Thus, the vector $\textbf{l}=\left(l_{1},\cdots,l_{d}\right)$  is naturally introduced for the $d$-dimensional lattice. 
	Then  
	it can be estimated that
	\begin{align*}
		H\left(S_{n}\right)&=-\sum_{\textbf{s}\in \Xi^{(d)}}\left(\prod_{i=1}^{d}p_{i}\left(s^{(i)}\right)\right)\log\left(\prod_{i=1}^{d}p_{i}\left(s^{(i)}\right)\right)\\
		&=-\sum_{\textbf{s}\in \Xi^{(d)}}\left(\prod_{i=1}^{d}p_{i}\left(s^{(i)}\right)\right)\cdot\sum_{j=1}^{d}\log\left(p_{j}\left(s^{(j)}\right)\right)\\
		&=-\sum_{j=1}^{d}\sum_{\textbf{s}\in \Xi^{(d)}}\left(\prod_{i=1}^{d}p_{i}\left(s^{(i)}\right)\right)\log\left(p_{j}\left(s^{(j)}\right)\right)\\
		&=-\sum_{j=1}^{d}\left(\sum_{s^{(j)}\in \Xi_{j}}p_{j}\left(s^{(j)}\right)\log\left(p_{j}\left(s^{(j)}\right)\right)\right)\cdot\prod_{i\ne j}\sum_{s^{(i)}}p_{i}\left(s^{(i)}\right)\\
		&=\sum_{j=1}^{d}H\left(S_{n,j}\right),	
	\end{align*}
	where $\textbf{s}=\left(s^{(1)},\cdots,s^{(d)}\right)$, $S_{n}=\left(S_{n,1},\cdots,S_{n,d}\right)$ and $p_{i}$ is the probability mass function of $S_{n,i}$. 
	It leads to
	\begin{align*}
		&\lim_{n \to \infty}\left[H\left(S_{n}\right)+\sum_{j=1}^{d}\log l_{j}-\frac{1}{2}\log\left(\left(2\pi\mathrm{e}\right)^{d}\cdot\mathrm{det}\left(\Sigma_{n}^{2}\right)\right)\right]\nonumber\\
		=&\lim_{n \to \infty}\left[\sum_{j=1}^{d}H\left(S_{n,j}\right)+\sum_{j=1}^{d}\log l_{j}-\frac{1}{2}\log\left(\left(2\pi\mathrm{e}\right)^{d}\cdot\mathrm{det}\left(\Sigma_{n}^{2}\right)\right)\right]\nonumber\\
		=&\lim_{n \to \infty}\sum_{j=1}^{d}\left(\frac{1}{2}\log\left(2\pi\mathrm{e}s_{n,j}^{2}\right)-\log
		l_{j}\right)+\sum_{j=1}^{d}\log l_{j}-\frac{1}{2}\log\left(\left(2\pi\mathrm{e}\right)^{d}\cdot\mathrm{det}\left(\Sigma_{n}^{2}\right)\right)\nonumber\\
		=&0,\nonumber	
	\end{align*}
	where 
	\begin{equation}
		\Sigma_{n}^{2}=\mathrm{diag}\left(s_{n,1}^{2},\cdots,s_{n,d}^{2}\right)
	\end{equation}
	is a $d\times d$ diagonal matrix for $S_{n}$. It can be similarly deduced from (4.26) and the independence between components of each $\tilde{\theta}_{n}$ that $\lim_{n \to 0}D_{KL}\left(S_{n}\right)=	\lim_{n \to 0}\sum_{j=1}^{d}D_{KL}\left(S_{n,j}\right)=0$.
	We finish our proof.
\end{proof}

\begin{remark}
The lattice spacing $l_{i}$
	can vary across dimensions, explicitly capturing scale differences along different directions. Moreover, the sum $\sum_{j=1}^{d}\log l_{j}$ appearing in the Shannon entropy difference is a technical correction. 
	Its role is to account for the discretization, specifically, to adjust for the entropy contribution inherent to the spacing between lattice points, which arises due to the quantization of the fluctuation sequence. For uniform spacing $l_{j}=1$
	across all dimensions, this term vanishes entirely. 
	Importantly, this correction does not alter the core physical conclusion, it merely ensures asymptotic convergence holds under discretization, refining the comparison with the continuous Gaussian approximation without obscuring the underlying dynamics.
\end{remark}

\begin{remark}
	Since  $\log\left(\left(2\pi\mathrm{e}\right)^{d}\cdot\mathrm{det}\left(\Sigma^{2}_{n}\right)\right)/2$
	denotes the Shannon entropy of a $d$-dimensional Gaussian distribution with covariance matrix $\Sigma_{n}^{2}$, Theorem 4.1 thus reveals that the macroscopic accumulation $S_{n}$
	drives the system toward a state of maximal randomness attainable for a given variance. The convergence of Shannon entropy to that of a Gaussian quantity signifies that, in the space of this observable, the system's uncertainty reaches the theoretical maximum for any distribution with the same second moment. Furthermore, the vanishing relative entropy demonstrates a stronger fact: the distribution of $S_{n}$
	becomes information-theoretically indistinguishable from that of its Gaussian counterpart with matched variance. All higher-order statistical structures (temporal correlations, non-Gaussian features) present in the microscopic fluctuation path $\left\{\tilde{\theta}_{n}\right\}_{n\in \mathbb{N}^{+}}$ are asymptotically erased by the summing operation, leaving only the variance as the sole determinant of the macroscopic information content.
\end{remark}

For notational clarity, in what follows, we reintroduce the superscript ``$\left(l\right)$'' or subscript ``$l$'' to denote the discretization scale
for related quantities.  

\begin{corollary}
	The sequence $\left\{\hat{\theta}_{n}\right\}_{n\in \mathbb{N}^{+}}$  satisfies that
	\begin{equation*}
		\lim_{n \to \infty}\left[h\left(\sum_{i=1}^{n}\hat{\theta}_{i}\right)-\frac{1}{2}\log\left(\mathrm{det}\left(\hat{\Sigma}_{n}^{2}\right)\right)\right]= \frac{d}{2}\cdot\log\left(2\pi\mathrm{e}\right),
	\end{equation*}
	where the covariance matrix $\hat{\Sigma}_{n}^{2}$ is defined in (4.29). Moreover, $\lim_{n \to \infty}D_{KL}\left(\sum_{i=1}^{n}\hat{\theta}_{i}\right)=0$,
	while there exists some $d$-dimensional Gaussian random variable $\Upsilon\sim\mathcal{N}\left(0,\hat{\Sigma}^{2}_{n}\right)$ written as $\Upsilon=\left(\Upsilon_{1},\cdots,\Upsilon_{d}\right)$ such that $\lim_{n \to \infty}\left\|\sum_{i=1}^{n}\hat{\theta}_{i}-\Upsilon\right\|_{TV}=0$.
\end{corollary}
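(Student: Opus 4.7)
The plan is to derive Corollary 3.1 as the $\mathbf{l} \to 0$ limit of Theorem 3.1 applied to the discretized sequence $\{\hat{\omega}_n^{(l)}\}$. For every fixed lattice vector $\mathbf{l} = (l_1,\dots,l_d)$ with each $l_j>0$, Theorem 3.1 already supplies the three target convergences (discrete entropy with scale correction, relative entropy, total variation) for $S_n^{(l)} = \sum_{k=1}^n \hat{\omega}_k^{(l)}$ as $n \to \infty$. The corollary is then obtained by shrinking $\mathbf{l}$, which converts the discrete entropy into its differential counterpart and the quantized Gaussian into an honest continuous Gaussian target.

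The main analytic input is the classical quantization--differential entropy identity: for any continuous $\mathbb{R}^d$-valued random vector $Y$ admitting a sufficiently regular density with finite differential entropy,
\begin{equation*}
H(Y^{(\mathbf{l})}) + \sum_{j=1}^d \log l_j \;\longrightarrow\; h(Y) \quad \text{as } \max_j l_j \to 0,
\end{equation*}
where $Y^{(\mathbf{l})}$ denotes the componentwise lattice rounding. I would apply this to $Y = \sum_{i=1}^n \theta_i/i$, whose density inherits regularity from the Wiener or L\'evy noise in (1). The componentwise bound $|\theta_k/k - \hat{\omega}_k^{(l)}| \le l_j/2$ in the $j$-th coordinate, established after (4), identifies $S_n^{(l)}$ with the quantization of $\sum_{i=1}^n \theta_i/i$ up to a controllable error. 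Together with the convergence $\Sigma_n^{2,(l)} \to \hat{\Sigma}_n^2$ of the matched covariances as $\mathbf{l}\to 0$, this lets me substitute $H(S_n^{(l)}) + \sum_j \log l_j$ by $h(\sum_i \theta_i/i)$ and replace $\tfrac{1}{2}\log((2\pi\mathrm{e})^d \det \Sigma_n^{2,(l)})$ by $\tfrac{d}{2}\log(2\pi\mathrm{e}) + \tfrac{1}{2}\log \det \hat{\Sigma}_n^2$ in the first conclusion of Theorem 3.1, producing the stated entropy identity.

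For the relative entropy part, an analogous quantization argument shows that $D_{KL}(S_n^{(l)}) \to D_{KL}(\sum_i \theta_i/i)$ as $\mathbf{l}\to 0$ for each fixed $n$, since the quantized Gaussian reference converges to the continuous Gaussian density in the Kullback--Leibler sense whenever both admit densities with the correct moment structure. A diagonal choice $\mathbf{l}(n)\to 0$ decaying slowly enough relative to the quantitative bounds in Lemma 3.1 and Theorem 3.1 then delivers $\lim_{n\to\infty} D_{KL}(\sum_i \theta_i/i) = 0$. The total variation convergence $\|\sum_i \theta_i/i - \Upsilon\|_{TV} \to 0$ with $\Upsilon \sim \mathcal{N}(0,\hat{\Sigma}_n^2)$ then follows automatically from Pinsker's inequality applied in the continuous setting, exactly as remarked after Definition 2.1.

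The main obstacle will be justifying the interchange of the two limits $n \to \infty$ and $\mathbf{l} \to 0$. This requires the error terms in Theorem 3.1 (inherited from Lemma 3.1 and Propositions 3.1--3.3) to be uniform, or at least quantitatively trackable, in $\mathbf{l}$, so that a diagonal sequence can be extracted. Equally delicate is verifying that $\sum_{i=1}^n \theta_i/i$ admits a continuously differentiable density with finite Fisher information, a regularity property immediate when the Wiener component in (1) is non-degenerate but which needs a short-time smoothing argument in the pure-jump L\'evy regime. Once these two points are settled, the three convergences of Corollary 3.1 follow from Theorem 3.1 essentially by the bookkeeping outlined above.
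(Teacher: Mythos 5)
Your proposal follows essentially the same route as the paper: apply Theorem 3.1 for each fixed lattice spacing, use the discrete-to-differential entropy relation $H(S_n^{(l)})+\sum_j\log l_j\to h(\cdot)$ together with the convergence of the matched covariances $\Sigma_{l,n}^{2}\to\hat{\Sigma}_n^{2}$, and conclude the relative entropy and total variation statements via (2) and Pinsker's inequality. The limit-interchange issue you flag is real (the paper swaps $\lim_{n}\lim_{l}$ and $\lim_{l}\lim_{n}$ without explicit uniformity), but your treatment of it does not constitute a different method.
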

\begin{proof}
	We start with the one-dimensional case. Specifically, by taking advantage of Theorem 4.1 and the relationship between differential entropy and discrete entropy \cite{MR1122806}, we obtain that  
	\begin{align}
		\lim_{n \to \infty}\left[h\left(\sum_{k=1}^{n}\hat{\theta}_{k}\right)-\frac{1}{2}\log\left(2\pi\mathrm{e}\bar{s}_{n}^{2}\right)\right]&=\lim_{n \to \infty}\lim_{l \to 0}\left[H\left(S^{(l)}_{n}\right)+\log l-\frac{1}{2}\log\left(2\pi\mathrm{e}\bar{s}_{n}^{2}\right)\right]\nonumber\\
		&=\lim_{l \to 0}\lim_{n \to \infty}\left[H\left(S^{(l)}_{n}\right)+\log l-\frac{1}{2}\log\left(2\pi\mathrm{e}\bar{s}_{n}^{2}\right)\right]\nonumber\\
		&=\lim_{l \to 0}\lim_{n\to \infty}\left[\frac{1}{2}\log\left(2\pi\mathrm{e}\left(s_{n}^{(l)}\right)^{2}\right)-\frac{1}{2}\log\left(2\pi\mathrm{e}\bar{s}_{n}^{2}\right)\right]\nonumber\\
		&=0,	
	\end{align}
	where $	\bar{s}_{n}^{2}:=\lim_{l \to 0}\left(s_{n}^{(l)}\right)^{2}$.
	We are required to state the rationality of $\bar{s}_{n}^{2}$. Specifically,
	\begin{align*}
		\mathrm{Var}\left(S^{(l)}_{n}\right)-\mathrm{Var}\left(\sum_{k=1}^{n}\hat{\theta}_{k}\right)&=\mathrm{Var}\left(\sum_{k=1}^{n}\varsigma_{k}^{(l)}\right)+2\mathrm{Cov}\left(\sum_{k=1}^{n}\hat{\theta}_{k},\sum_{k=1}^{n}\varsigma_{k}^{(l)}\right)\\
		&=\sum_{k=1}^{n}\mathrm{Var}\left(\varsigma_{k}^{(l)}\right)+2\sum_{k=1}^{n}\sum_{i=1}^{n}\mathrm{Cov}\left(\hat{\theta}_{k},\varsigma_{i}^{(l)}\right)\\&\le \frac{nl^{2}}{4}+\sum_{k=1}^{n}\sum_{i=1}^{n}\sqrt{\mathrm{Var}\left(\hat{\theta}_{k}\right)\mathrm{Var}\left(\varsigma_{i}^{(l)}\right)}\\
		&\le \frac{nl^{2}}{4}+\pi l\\
		&\to 0,\ \ \ \ \mathrm{as}\ \ l \to 0,
	\end{align*}
	where $\sum_{k=1}^{n}\varsigma^{(l)}_{k}:= S_{n}^{(l)}-\sum_{k=1}^{n}\hat{\theta}_{k}$.  Therefore, we deduce from (4.28) that $D_{KL}\left(\sum_{k=1}^{n}\hat{\theta}_{k}\right)$ converges to 0, where the corresponding Gaussian distribution in the relative entropy has the same variance $\bar{s}_{n}^{2}$ and expectation with $\sum_{k=1}^{n}\hat{\theta}_{k}$. 
	Moreover, for some Gaussian random variable $\Upsilon\sim\mathcal{N}\left(0,\bar{s}_{n}^{2}\right)$, we have $\lim_{n \to \infty}\left\|\sum_{k=1}^{n}\hat{\theta}_{k}-\Upsilon \right\|_{\mathrm{TV}}=0$  immediately.  Next, we raise the dimension of the system to any finite integer $d \ge 1$. Analogous to the one-dimensional case as above, the $j$-th component $s_{l,n,j}^{2}$ (representing the variance of $\tilde{\theta}_{n,j}^{(l)}$)
	considered here admits a limiting value $\bar{s}_{n,j}^{2}$ as $l\to 0$. The resulting $d\times d$ diagonal limit  matrix
	\begin{equation}
		\hat{\Sigma}_{n}^{2}:=\mathrm{diag}\left(\bar{s}_{n,1}^{2},\cdots,\bar{s}_{n,d}^{2}\right)
	\end{equation} 
	is	thus reasonable to govern the asymptotic behavior of the process $\left\{\hat{\theta}_{n}\right\}_{n\in\mathbb{N}^{+}}$ from the point of view of entropy, which leads us to obtain that
	\begin{align*}
		&\lim_{n \to \infty}\left[h\left(\sum_{k=1}^{n}\hat{\theta}_{k}\right)-\frac{1}{2}\log\left(\left(2\pi\mathrm{e}\right)^{d}\cdot\mathrm{det}\left(\hat{\Sigma}_{n}^{2}\right)\right)\right]\\
		=&\lim_{\textbf{l}\to 0}\lim_{n \to \infty}\left[H\left(S_{n}^{(l)}\right)+\sum_{j=1}^{d}\log l_{j}-\frac{1}{2}\log\left(\left(2\pi\mathrm{e}\right)^{d}\cdot\mathrm{det}\left(\hat{\Sigma}_{n}^{2}\right)\right)\right]\\
		=&\lim_{\textbf{l}\to 0}\lim_{n \to \infty}\left[\frac{1}{2}\log\left(\left(2\pi\mathrm{e}\right)^{d}\cdot\mathrm{det}\left(\Sigma_{l,n}^{2}\right)\right)-\frac{1}{2}\log\left(\left(2\pi\mathrm{e}\right)^{d}\cdot\mathrm{det}\left(\hat{\Sigma}_{n}^{2}\right)\right)\right]\\
		=&0,
	\end{align*}
	together with $	\lim_{n \to \infty}D_{KL}\left(\sum_{k=1}^{n}\hat{\theta}_{k}\right)=0$.
	Our proof is complete now.
\end{proof}

\section{Conclusion}

In this work, we have studied fluctuation processes generated by
stochastically perturbed integrable Hamiltonian systems from both
dynamical and probabilistic 
viewpoints. We have shown that
the properly rescaled accumulated observables admit Gaussian
fluctuation descriptions, providing a rigorous characterization of the
asymptotic statistical behavior induced by stochastic perturbations.
Beyond the distributional properties of individual observables, we have
introduced a lattice-based symbolic representation of the fluctuation
process and investigated the entropy structure of the associated
constrained path space. By analyzing the corresponding shift dynamics,
we established an entropy variational characterization and showed that
the Gaussian product measure realizes the maximal entropy under the
prescribed asymptotic constraint.

Several directions remain open for future investigation. One natural
problem is to extend the present framework to more general classes of
non-integrable or weakly chaotic Hamiltonian systems, where the
interaction between deterministic dynamics and stochastic perturbations
may produce richer fluctuation structures. Another direction is to
develop entropy characterizations under more general constraints,
including non-Gaussian fluctuation regimes and alternative notions of
dynamical complexity.

\section{Appendix}
\begin{proof}[Proof of Lemma 4.2]
	It can be first deduced from (4.20) that
	\begin{align*}
		S_{n}&\sim\sum_{i=1}^{n}V_{i}+\sum_{i=1}^{N_{n}}B_{i}
			=\left[\sum_{i=1}^{n}V_{i}+\left(\sum_{i=1}^{N_{n}-1}B_{i}\right)\mathbb{I}_{\left\{N_{n}\ge 1\right\}}\right]+\mathbb{I}_{\left\{N_{n}\ge 1\right\}}B
		:= V^{(n)}+W^{(n)},	
	\end{align*}
	where $N_{n}=\sum_{i=1}^{n}W_{i}$, $B\sim\mathrm{Bern}\left(1/2\right)$, and, $q^{(n)}=1-\prod_{i=1}^{n}(1-q_{i})\to 1$ as $n \to \infty$.
	Therefore, 
	\begin{align*}
		\mathbb{E}\left(	S_{n}\right)&=\mathbb{E}\left[\sum_{i=1}^{n}V_{i}\mid W^{(n)}=0\right]\prod_{i=1}^{n}\left(1-q_{i}\right)+\mathbb{E}\left[\sum_{i=1}^{n}X_{i}\mid W^{(n)}=1\right]\left(1-\prod_{i=1}^{n}\left(1-q_{i}\right)\right)=0,	
	\end{align*}
	while
	\begin{align*}
		\mathbb{E}	S_{n}^{2}
		&=\mathbb{E}\left[\left(\sum_{i=1}^{n}V_{i}\right)^{2}\mid W^{(n)}=0\right]\prod_{i=1}^{n}\left(1-q_{i}\right)
	+\mathbb{E}\left[\left(\sum_{i=1}^{n}X_{i}\right)^{2}\mid W^{(n)}=1\right]\left(1-\prod_{i=1}^{n}\left(1-q_{i}\right)\right)\\
		&=\sum_{i=1}^{n}\mathrm{Var}\left(\tilde{\theta}_{i}\right).	
	\end{align*}
	Under the fact that
	\begin{equation*}
		\mathbb{E}\left[\sum_{i=1}^{n}V_{i}\mid W^{(n)}=0\right]=O\left(n\right),\ \ \ 
		\mathbb{E}\left[\left(\sum_{i=1}^{n}V_{i}\right)^{2}\mid W^{(n)}=0\right]=O\left(n^{2}\right),
	\end{equation*}
	we obtain from the convergence of $q^{(n)}$ that
	\begin{equation*}
		\mathbb{E}\left[	S_{n}\mid W^{(n)}=1\right]=o\left(1\right) ,\ \ \ \	\mathbb{E}\left[	S_{n}^{2}\mid W^{(n)}=1\right]=\sum_{i=1}^{n}\mathrm{Var}\left(\tilde{\theta}_{i}\right)\left(1+o\left(1\right)\right).
	\end{equation*}
	Our proof is complete.
\end{proof}

\bibliographystyle{plain}

\end{document}